\DeclareMathOperator*{\loc}{loc}
\newtheorem{theorem}{Theorem}[section]
\newtheorem{lemma}[theorem]{Lemma}
\newtheorem{corollary}[theorem]{Corollary}
\theoremstyle{definition}
\newtheorem{definition}[theorem]{Definition}
\newtheorem{remark}[theorem]{Remark}
\numberwithin{equation}{section}
\title[The equivalence of supersolutions]{The equivalence of weak and very weak supersolutions to the porous medium equation}
\thanks{The research is supported by the Emil Aaltonen Foundation and the Academy of Finland}
\author[Lehtel\"a and Lukkari]{Pekka Lehtel\"a and Teemu Lukkari}
\newcommand\rn{\mathbb R^n}
\newcommand\re{\mathbb R}
\newcommand\n{\mathbb N}
\newcommand\grad{\nabla}
\newcommand\bd{\partial}
\newcommand\ph{\varphi}
\newcommand\eps{\varepsilon}
\newcommand\supp{\operatorname{supp}}
\providecommand{\ch}[1]{\text{\raise 2pt \hbox{$\chi$}\kern-0.2pt}_{#1}}
\providecommand{\vint}[1]{\mathchoice
          {\mathop{\vrule width 5pt height 3 pt depth -2.5pt
                  \kern -9pt \kern 1pt\intop}\nolimits_{\kern -5pt{#1}}}%
          {\mathop{\vrule width 5pt height 3 pt depth -2.6pt
                  \kern -6pt \intop}\nolimits_{\kern -3pt{#1}}}%
          {\mathop{\vrule width 5pt height 3 pt depth -2.6pt
                  \kern -6pt \intop}\nolimits_{\kern -3pt{#1}}}%
          {\mathop{\vrule width 5pt height 3 pt depth -2.6pt
                  \kern -6pt \intop}\nolimits_{\kern -3pt{#1}}}}
\begin{document}

\begin{abstract}
  We prove that various notions of supersolutions to the porous medium
  equation are equivalent under suitable conditions. More
  spesifically, we consider weak supersolutions, very weak
  supersolutions, and $m$-superporous functions defined via a
  comparison principle. The proofs are based on comparison principles
  and a Schwarz type alternating method, which are also interesting in
  their own right. Along the way, we show that Perron solutions with
  merely continuous boundary values are continuous up to the parabolic
  boundary of a sufficiently smooth space-time cylinder.
\end{abstract}

\subjclass[2010]{Primary 35K65, Secondary 35K20, 35D30, 35D99, 31C45}

\keywords{Porous medium equation, weak solutions, very weak solutions,
  supersolutions, comparison principle, boundary value problems}

\date{\today}  
\maketitle

\section{Introduction}
Our aim is to clarify and extend the connections between various
notions of solutions and supersolutions to the porous medium equation
\begin{equation}
  \label{eq:PME}
  u_t - \Delta u^m =0 \quad \text{in } \Omega_T=\Omega \times (0,T).
\end{equation}
We treat both the case of prescribed boundary values and the purely
local notions, and restrict our attention to the degenerate case
$m>1$.  For the basic theory of the equation and numerous further
references, we refer to the monographs \cite{daskalopoulos}, \cite{vazquez},
\cite{vazquez2} and \cite{nonlinear diffusion}.

There are at least two natural ways to define solutions to
\eqref{eq:PME}. \emph{Weak solutions} are defined by multiplying the
equation by a suitable test function and integrating by parts once. In
this definition, the function $u^m$ is assumed to be in a parabolic
Sobolev space.
In the case of the boundary value problem, the boundary values are
interpreted in a Sobolev sense.  The chief attraction of this notion
is that a weak solution itself is an admissible test function after a
mollification in the time direction, which leads to natural energy
estimates. On the other hand, we may integrate by parts twice in the
space variable, thus relaxing the regularity assumptions for
solutions. This leads to \emph{very weak solutions}, a notion which
makes sense under the minimal assumptions that $u$ and $u^m$ are
integrable.  The boundary values are taken into account via including
the appropriate integrals over the lateral boundary and at the initial
time.  One of the advantages of the very weak solutions is their
stability under convergence.  Weak and very weak solutions with fixed
boundary values turn out to be the same. This result is probably known
to experts, at least when the boundary values are sufficiently
regular.

It is important to understand not only the solutions, but also
supersolutions. Supersolutions arise naturally in obstacle problems
\cite{smallest, obstacle} and problems with measure data
\cite{measure, jee}. Furthermore, supersolutions connect the equation
to potential theory, providing important tools such as the Perron method
\cite{perron}. In the classical theory they also play a central role
in the study of boundary regularity, removability of sets and other
fine properties.

There are again various ways to define supersolutions. 
\emph{Weak and very weak supersolutions} (Definition \ref{def: weak
  super} and Definition \ref{def: very weak super}) satisfy the
inequality
\[
\frac {\partial u}{\partial t}-\Delta u^m \ge 0,
\]
the rigorous interpretation being analoguous to the concepts of weak
and very weak solutions.  Another way is to use a comparison
principle: supersolutions are lower semicontinuous functions which
satisfy a parabolic comparison principle with respect to continuous
weak solutions.  We call these supersolutions \emph{$m$-superporous
  functions} (Definition \ref{def: $m$-superporous}). This is one of
the ways to define superharmonic functions in classical potential
theory, and it is amenable to generalization to nonlinear equations.
In the case of the PME, the basic properties of this class of
supersolutions have been established in \cite{supersol}; see also
\cite{dichotomy}.  Several nice properties follow immetedially from
the definition of $m$-superporous functions. For instance, it is easy
to see that the minimum of two $m$-superporous functions is also
$m$-superporous. Moreover, the $m$-superporous functions form a closed
class under increasing convergence.

A natural question is whether the different classes of supersolutions
are equivalent. The similar problem is well understood in the case of
$p$-Laplace type equations, see \cite{superparabolic, LM}. However,
the question is more challenging for the porous medium equation. For
example, the boundary values cannot be perturbed in the standard way,
because constants cannot be added to solutions. Further difficulties
arise when trying to incorporate the very weak notions to the
arguments.  Therefore new methods have to be developed.

Our main result is the equivalence of the above classes of
supersolutions under suitable conditions:
\begin{theorem}\label{thm: super}
  The following properties are equivalent for continuous, nonnegative
  functions $u$.
  \begin{enumerate}
  \item  $u$ is a weak supersolution, \label{item:weak}
  \item $u$ is a very weak supersolution, \label{item:veryweak}
  \item $u$ is $m$-superporous. \label{item:superporo}
  \end{enumerate}
\end{theorem}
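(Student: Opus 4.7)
The equivalence breaks into three directions. Two are routine: $(\ref{item:weak}) \Rightarrow (\ref{item:veryweak})$ follows by integrating by parts the spatial gradient term in the weak formulation against a smooth nonnegative test function, the regularity $u^m \in L^2_{\text{loc}}(0,T;W^{1,2}_{\text{loc}}(\Omega))$ being built into Definition~\ref{def: weak super}; and $(\ref{item:weak}) \Rightarrow (\ref{item:superporo})$ is the classical comparison principle for weak sub/supersolutions of the porous medium equation, where on a smooth subcylinder $D$ with $h$ a continuous weak solution satisfying $h \le u$ on $\bd_p D$, testing the inequality for $u-h$ with a time-mollified $(h-u)_+$ forces $u \ge h$ in $D$; lower semicontinuity is automatic from continuity of $u$.

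For the main implication $(\ref{item:superporo}) \Rightarrow (\ref{item:weak})$, the strategy is to realise $u$ as the solution of an obstacle problem with $u$ itself as obstacle. For each sufficiently smooth $D \Subset \Omega_T$, the boundary-regularity theorem for Perron solutions mentioned in the abstract yields a continuous weak solution $h_D$ on $D$ attaining $u$ on $\bd_p D$; by $m$-superporosity, $h_D \le u$. Thus the Poisson modification $P_D u$, equal to $h_D$ on $D$ and to $u$ elsewhere, is a continuous, $m$-superporous minorant of $u$ that solves the equation weakly on $D$. To construct an obstacle solution with adequate continuity, I would iterate Poisson modifications in Schwarz fashion, cycling over a countable locally-finite cover $\{D_j\}$ of $\Omega_T$ and at each step replacing the current iterate only on the set where it still exceeds $u$; in the limit this produces the smallest weak supersolution $w \ge u$ on $D$ with data $u|_{\bd_p D}$, continuous up to $\bd_p D$. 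On the noncoincidence set $\{w > u\}$, $w$ is a continuous weak solution equal to $u$ on the parabolic boundary of that set, so the $m$-superporosity of $u$ forces $w \le u$ there, contradicting $w > u$ unless the set is empty. Hence $u = w$ is a weak supersolution on $D$, and since $D$ is arbitrary, on all of $\Omega_T$. The residual link $(\ref{item:veryweak}) \Rightarrow (\ref{item:weak})$ should then come from Caccioppoli-type estimates obtained by testing the very-weak formulation against a mollified $\eta^2 u^m$, which is admissible thanks to continuity of $u$ and yields a local $L^2$ bound on $\nabla u^m$.

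The chief obstacle is making the obstacle/coincidence-set argument rigorous for the porous medium equation, where one cannot freely add constants to solutions or perturb their boundary data, so the standard elliptic and $p$-parabolic tricks are unavailable. The new boundary-regularity theorem for Perron solutions is precisely what is needed to feed each Poisson modification with a continuous weak solution attaining the prescribed data, and the Schwarz-type alternating scheme is what delivers an obstacle solution $w$ continuous across $\bd_p \{w > u\}$, so that the comparison principle in Definition~\ref{def: $m$-superporous} can be invoked there and close the circle.
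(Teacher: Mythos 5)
Your overall architecture differs from the paper's, and the difference matters: the paper closes the equivalence with the cycle $(\ref{item:weak})\Rightarrow(\ref{item:veryweak})\Rightarrow(\ref{item:superporo})\Rightarrow(\ref{item:weak})$, where the genuinely new step is $(\ref{item:veryweak})\Rightarrow(\ref{item:superporo})$, carried out via the boundary-trace Lemma~\ref{boundary values}, the cylinder comparison of Lemma~\ref{super comparison}, the Schwarz-type extension of Lemma~\ref{comparison extends}, and the perturbation Lemma~\ref{boundary value lemma}. You instead try to prove $(\ref{item:veryweak})\Rightarrow(\ref{item:weak})$ directly by a Caccioppoli estimate, and this is the proposal's fatal gap. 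The very weak formulation only furnishes the pairing $\int u^m\,\Delta\ph$ for \emph{smooth} $\ph$; there is no gradient term to integrate against, and you cannot simply substitute a mollification of $\eta^2 u^m$, because spatial mollification destroys the nonlinear algebra $u\mapsto u^m$ on which the Caccioppoli computation relies (one needs to pair $\nabla u^m$ with itself, which is not available when $\nabla u^m$ is the very object whose existence must be proved). A Caccioppoli estimate is indeed used in the paper, but for the obstacle-problem approximants $u_k$, which are already weak supersolutions with $u_k^m\in L^2(t_1,t_2;H^1)$; extracting the same estimate for a bare very weak supersolution is precisely what one cannot do, and it is why the paper takes the long route through $m$-superporosity.

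Your $(\ref{item:superporo})\Rightarrow(\ref{item:weak})$ sketch is in the right spirit (it is close to Lemma~\ref{$m$-superporous to weak super}), but the coincidence-set step as written is not valid: you invoke the comparison principle of Definition~\ref{def: $m$-superporous} on the open set $\{w>u\}$, which is not a space-time cylinder $U_{t_1,t_2}$, so the definition gives you nothing there. The paper circumvents this in two ways that you should incorporate. First, it does not use $u$ itself as obstacle but an increasing sequence of \emph{smooth} minorants $\psi_k\nearrow u$, for which the obstacle theory of \cite{obstacle} applies; second, to force $u_k\le u$ it covers the relevant compact set by a \emph{finite union of cylinders whose bases are balls} and applies the extended comparison principle of Lemma~\ref{comparison extends} (established by the Schwarz alternating scheme) on that union, rather than applying comparison on the noncoincidence set directly. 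Your description of building $w$ by Poisson modification is also confused as written: Poisson modifying $u$ on a cylinder $D$ produces $P_D u\le u$ by $m$-superporosity, so an iteration of such modifications decreases rather than produces a supersolution $w\ge u$; what you actually want is the obstacle solution above $u$, which is a different construction. Finally, the implication $(\ref{item:weak})\Rightarrow(\ref{item:superporo})$ you call routine is also not a one-liner for the PME (one cannot perturb boundary data by adding constants), though it is known from \cite{supersol}; the paper avoids proving it separately by routing through $(\ref{item:veryweak})$.
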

For completeness we also address the question of equivalence of the
classes of solutions, as it is difficult to find a reference where
this matter is treated thoroughly. Along the way, we obtain that
Perron solutions with merely continuous boundary values are continuous
up to the parabolic boundary of a sufficiently smooth space-time
cylinder, thus complementing the results of \cite{perron}.

The natural situation for Theorem \ref{thm: super} would be to
consider locally bounded lower semicontinuous functions. Indeed, lower
semicontinuity is the natural regularity of weak supersolutions, see
\cite{lower semicontinuity}, and local boundedness is definitely
necessary. Further, the equivalence of weak supersolutions and
$m$-superporous functions under these weaker assumptions has been
established in \cite{supersol}. Our contribution is including very
weak supersolutions to the theory. The necessity of boundedness can be
seen by considering the Barenblatt solution
\[
\mathcal{B}_m(x,t)=
\begin{cases}
  t^{-\lambda} \left ( C-\frac{\lambda(m-1)}{2mn}\frac{|x|^2}{t^{2\lambda/n}}\right )_+^{1/(m-1)}, \quad t>0,\\
0, \hfill t\le 0,
\end{cases}
\] 
where 
\[
\lambda=\frac{n}{n(m-1)+2}.
\]
The Barenblatt solution $\mathcal{B}_m$ is an unbounded
$m$-superporous function, but its gradient fails to be square
integrable in any neighbourhood of the origin, and thus
$\mathcal{B}_m$ is not a weak supersolution.

It is unclear whether the classes are the same, if one only assumes
lower semicontinuity. The crucial point where continuity is used is to
show that very weak supersolutions are also very weak supersolutions
with boundary values given by the function itself. This is needed for
proving the comparison principle for very weak
supersolutions. Further, there are other challenges already in the
continuous case. Therefore we find the continuity assumption
reasonable. Note that \emph{equivalence holds for solutions without
  assuming continuity:} nonnegative very weak solutions turn out to be
continuous after a redefinition on a set of zero measure, by
\cite{weak solutions}.

Yet another way to define supersolutions is viscosity supersolutions,
see \cite{brandle, caffarelli}. This notion uses pointwise touching
test functions. In this paper we focus on the previously mentioned
classes of supersolutions. A very interesting open question is whether
viscosity supersolutions are equivalent to the other notions of
supersolutions as well. The answer is known to affirmative for
equations similar to the $p$-Laplacian by \cite{JLM}, so one would
expect the same result to hold for the PME as well.

\section{Weak solutions}\label{sec: ws}

Throughout the work we use the following notation. We work in
space-time cylinders $\Omega_T=\Omega \times (0,T) \subset \re^{n+1}$,
where $\Omega\subset \rn$ is a bounded domain, such that $\bd \Omega$
is sufficiently nice, for example smooth or Lipschitz. We denote the
lateral boundary of $\Omega_T$ by $\Sigma_T=\bd \Omega \times [0,T]$
and the parabolic boundary by $\bd_p \Omega_T=\Omega \times \{0\} \cup
\Sigma_T$. For $U\Subset \Omega$, we denote $U_{t_1,t_2}=U\times
(t_1,t_2)$. The parabolic boundary of $U_{t_1,t_2}$ is defined as
$\bd_p U_{t_1,t_2}=U\times \{t_1\} \cup \bd U \times [t_1,t_2]$.

We consider the solutions to the boundary value problem
\begin{equation}
  \label{eq:BVP}
  \begin{cases}
  u_t-\Delta u^m = 0 \quad \text{on } \Omega_T,\\
u(x,0)=u_0(x),\\
u^m=g \quad \text{on } \Sigma_T,
\end{cases}
\end{equation}
where $u_0$ is in $H^1(\Omega)$ and $g$ is a continuous function
defined in $\overline{\Omega_T}$ such that $g\in
L^2(0,T;H^1(\Omega))$. Further, we require that the initial and
lateral boundary values are compatible in the sense that the function
$\varphi:\bd_p \Omega_T\to \re$ defined by
\begin{displaymath}
  \varphi(x,t)=
  \begin{cases}
    g(x,t)^{1/m}, & (x, t)\in \Sigma_T, \\
    u_0(x), & (x,t)\in \Omega\times \{0\}
  \end{cases}
\end{displaymath}
is continuous.  For simplicity, we will assume that $g$ and $u_0$ are
non-negative and thus the solutions will be non-negative as well by
the comparison principle, which will be proved in section \ref{sec:
  vws}. Hence we may assume that the solutions are always
non-negative. 


\begin{definition}\label{def: weak} 
We say $u$ is a local weak solution to \eqref{eq:PME} if $u^m\in
  L^2_{\loc}(0,T;H^1_{\loc}(\Omega))$ and $u$ satisfies the equality
  \[
  \int_{\Omega_T} (-u \ph_t + \nabla u^m\cdot \nabla \ph )\, dx \, dt = 0
  \]
  for any $\ph\in C_0^\infty (\Omega_T)$.

  A function $u$ is a weak solution to the boundary
  value problem \eqref{eq:BVP}, if $u^m\in
  L^2(0,T;H^1(\Omega))$, $u^m-g\in L^2(0,T;H_0^1(\Omega)),$
  and
  \[
  \int_{\Omega_T} (-u \ph_t + \nabla u^m\cdot \nabla \ph )\, dx \, dt = \int_\Omega u_0(x)\ph(x,0)\, dx
  \]
  for all smooth test functions $\ph$ with compact support in space,
  vanishing at the time $t=T$.
\end{definition}

We will show that the boundary value problem \eqref{eq:BVP} has at
most one weak solution.  This follows by using a clever test function
devised by Ole\u\i nik.

\begin{lemma}\label{weak solution lemma}
  Weak solutions to the boundary value problem \eqref{eq:BVP} are
 unique.
\end{lemma}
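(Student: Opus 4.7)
The plan is to use Ole\u\i nik's test-function trick on the difference of two solutions. Let $u_1,u_2$ be weak solutions to \eqref{eq:BVP} with the same data, and set $v=u_1-u_2$ and $V=u_1^m-u_2^m$. Subtracting the two weak formulations, the initial integrals on the right-hand side cancel, so
\[
\int_{\Omega_T}\bigl(-v\,\ph_t+\nabla V\cdot\nabla\ph\bigr)\,dx\,dt=0
\]
for every admissible $\ph$. A key observation is that $V\in L^2(0,T;H_0^1(\Omega))$, since $u_1^m$ and $u_2^m$ share the same lateral trace $g$ in the sense of $L^2(0,T;H_0^1(\Omega))$.

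The idea is to plug in the Ole\u\i nik function
\[
\ph(x,t)=\int_t^T V(x,s)\,ds,\qquad 0\le t\le T,
\]
which vanishes at $t=T$, lies in $L^2(0,T;H_0^1(\Omega))$, and satisfies $\ph_t=-V\in L^2(\Omega_T)$. Then $-v\,\ph_t=(u_1-u_2)(u_1^m-u_2^m)\ge 0$ by monotonicity of $s\mapsto s^m$. For the gradient term, write $W(x,t)=\nabla\ph(x,t)=\int_t^T\nabla V(x,s)\,ds$; then $W_t=-\nabla V$, $W(\cdot,T)=0$, and an integration by parts in $t$ gives
\[
\int_{\Omega_T}\nabla V\cdot\nabla\ph\,dx\,dt=-\int_{\Omega_T}W_t\cdot W\,dx\,dt=\tfrac12\int_\Omega|W(x,0)|^2\,dx\ge 0.
\]
Both nonnegative terms must therefore vanish, and in particular $(u_1-u_2)(u_1^m-u_2^m)=0$ almost everywhere; strict monotonicity of $s\mapsto s^m$ on $[0,\infty)$ then forces $u_1=u_2$ a.e.\ in $\Omega_T$, as desired.

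The principal technical obstacle is that $\ph$ is neither smooth nor compactly supported in space, so it is not directly covered by Definition~\ref{def: weak}. I would first extend the class of admissible test functions, by density, to all $\ph\in L^2(0,T;H_0^1(\Omega))$ with $\ph_t\in L^2(\Omega_T)$ and $\ph(\cdot,T)=0$; the parabolic Sobolev regularity of $u^m$ and integrability of $u$ (together with standard time-mollification of $V$ to obtain smooth approximants supported compactly in space) make this extension routine. Once this is in place, plugging in $\ph$ and computing as above closes the argument. The only subtle point is verifying that the time integration by parts on the gradient term carries over to the Bochner-valued setting, which is handled by approximating $V$ by $V_\eps\in C_c^\infty(\Omega_T)$ in $L^2(0,T;H_0^1(\Omega))$ and passing to the limit in both terms.
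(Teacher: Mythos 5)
Your proposal is correct and follows exactly the route the paper takes: the paper's proof consists of exhibiting the Ole\u\i nik test function $\ph=\int_t^T(u^m-v^m)\,ds$ and referring to \cite[Theorem 5.3]{vazquez} for the details, and your write-up supplies precisely those details (the sign of $(u_1-u_2)(u_1^m-u_2^m)$, the time integration by parts giving $\tfrac12\int_\Omega|W(x,0)|^2\,dx$, and the density argument justifying the non-smooth test function). Nothing further is needed.
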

\begin{proof}
The proof is a standard application of the Ole\u\i nik test function
  \[
  \ph=
\begin{cases}
  \int_t^T (u^m -v^m) \, ds, \quad \text{if } 0\le t < T,\\
0 \quad \text{otherwise.}  
\end{cases}
\]
For a detailed proof, we refer to \cite[Theorem 5.3]{vazquez}.
\end{proof}

\section{Very weak solutions}\label{sec: vws}

In this section we consider another natural class of generalized
solutions, very weak solutions. This concept is defined as follows.
\begin{definition}\label{def: very weak}
  We say $u\in L^1_{\loc}(\Omega_T)$ is a local very weak solution to
  \eqref{eq:PME} if $u^m\in L^1_{\loc}(\Omega_T)$ and $u$ satisfies
  the equality
  \[
  \int_{\Omega_T} (u^m \Delta \eta + u \eta_t) \, dx \, dt =0
  \]
  for any $\eta\in C_0^\infty(\Omega_T).$

  A function $u\in L^1(\Omega_T)$ is a very weak solution to the
  boundary value problem \eqref{eq:BVP}, if $u^m \in L^1(\Omega_T)$
  and
  \[
  \int_{\Omega_T}( u^m \Delta \eta + u \eta_t) \, dx \, dt + \int_\Omega u_0(x)\eta (x,0) \, dx = \int_{\Sigma_T} g \partial_\nu \eta \, dS \, dt 
  \]
  for all smooth $\eta$ vanishing on $\Sigma_T$ and at time
  $t=T$. Note that the test functions $\eta$ are not required to have
  compact support in $\Omega_T$.
\end{definition}

We prove the comparison principle for the very weak solutions to the
boundary value problem \eqref{eq:BVP}. That is, if $u$ and $v$ are
very weak solutions to \eqref{eq:BVP} such that $u\ge v$ on $\bd_p
\Omega_T$ and $u^m, v^m \in L^2(\Omega_T)$, then $u\ge v$ in
$\Omega_T$. In fact, we only need to assume $u$ is a very weak
supersolution and $v$ is a very weak subsolution, see Lemma \ref{super
  comparison}. First, we present a technical lemma, which will be used in proving the comparison principles for very weak solutions and very weak supersolutions. The idea is, that in both cases the proof can be reduced to using the following lemma. 

\begin{lemma}\label{comparison tool}
  Let $u,v \in L^2(\Omega_T)$ and suppose $u^m,v^m\in L^2(\Omega_T)$. If
\[
\int_{\Omega_T}\big((v^m-u^m)\Delta \ph + (v-u)\ph_t \big) \, dx \, dt \ge 0
\]
for every smooth $\ph$ vanishing on $\Sigma_T$, then $u\ge v$ in $\Omega_T$. 
\end{lemma}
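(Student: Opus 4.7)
The plan is a duality argument: solving a backward adjoint parabolic problem with coefficient tailored to $u$ and $v$ will produce test functions that force the desired pointwise comparison. Set $w = v - u$ and, using that $s\mapsto s^m$ is increasing, define the nonnegative coefficient
\[
a(x,t) = \begin{cases} (v^m - u^m)/(v - u), & u(x,t) \neq v(x,t), \\ 0, & u(x,t) = v(x,t), \end{cases}
\]
so that $v^m - u^m = a\,w$ and the hypothesis becomes $\int_{\Omega_T} w(a\Delta \varphi + \varphi_t)\,dx\,dt \ge 0$. It then suffices to prove $\int_{\Omega_T} \theta w\,dx\,dt \le 0$ for every $\theta \in C_0^\infty(\Omega_T)$ with $\theta \ge 0$, since this forces $w \le 0$ almost everywhere.

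Given such a $\theta$ and a parameter $\epsilon > 0$, I would approximate $a$ by a smooth function $a_\epsilon$ satisfying $\epsilon \le a_\epsilon \le \|a\|_\infty + \epsilon$ (concretely, mollify $a$ at a scale $\delta(\epsilon)$ and add $\epsilon$), and solve the backward parabolic problem
\[
\partial_t \varphi_\epsilon + a_\epsilon \Delta \varphi_\epsilon = -\theta, \quad \varphi_\epsilon|_{\Sigma_T} = 0, \quad \varphi_\epsilon(\cdot, T) = 0.
\]
Because $a_\epsilon$ is smooth and uniformly parabolic, standard theory yields a smooth solution $\varphi_\epsilon$, and the maximum principle gives $0 \le \varphi_\epsilon \le T\|\theta\|_\infty$. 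Since $\varphi_\epsilon$ is smooth and vanishes on $\Sigma_T$, it is admissible in the hypothesis. Splitting $a\Delta \varphi_\epsilon = a_\epsilon \Delta \varphi_\epsilon + (a - a_\epsilon) \Delta \varphi_\epsilon$ and using the adjoint equation, the hypothesis reduces to
\[
\int_{\Omega_T} \theta w\,dx\,dt \le \int_{\Omega_T} w\,(a - a_\epsilon)\,\Delta \varphi_\epsilon\,dx\,dt.
\]

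The crux is that the right-hand side tends to zero as $\epsilon \to 0$. For this I would multiply the adjoint equation by $\Delta \varphi_\epsilon$, integrate by parts in time (using $\varphi_\epsilon(\cdot, T) = 0$ and $\varphi_\epsilon|_{\Sigma_T} = 0$) and twice in space (using that $\theta$ is compactly supported in $\Omega_T$), to obtain the Ole\u\i nik-type estimate
\[
\int_{\Omega_T} a_\epsilon (\Delta \varphi_\epsilon)^2 \, dx\,dt \le -\int_{\Omega_T} (\Delta \theta)\,\varphi_\epsilon\,dx\,dt \le C_\theta,
\]
with $C_\theta$ independent of $\epsilon$ (via the $L^\infty$ bound on $\varphi_\epsilon$). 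A Cauchy--Schwarz factorisation
\[
(a - a_\epsilon) \Delta \varphi_\epsilon \;=\; \frac{a - a_\epsilon}{\sqrt{a_\epsilon}} \cdot \sqrt{a_\epsilon}\,\Delta \varphi_\epsilon
\]
then reduces the task to showing $\int_{\Omega_T} w^2 (a - a_\epsilon)^2 / a_\epsilon\,dx\,dt \to 0$, which can be arranged by tying the mollification scale $\delta(\epsilon)$ to $\epsilon$ so that $\|a - a_\epsilon\|_{L^2(\Omega_T)}^2 \ll \epsilon$.

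\textbf{Main obstacle.} The hardest part is precisely this error estimate. One must choose $a_\epsilon$ smooth enough for the adjoint problem to be solvable classically, bounded below so that the equation is uniformly parabolic, and yet close enough to the possibly irregular $a$ in the right weighted norm. Under the boundedness of $u$ and $v$ available in the paper's intended applications (ensuring $a, w \in L^\infty$), this balancing can be struck cleanly and the rest of the argument then goes through smoothly.
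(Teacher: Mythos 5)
Your duality argument is, in substance, the proof the paper itself relies on: the lemma is proved by reference to Theorem 6.5 of V\'azquez's monograph, and that proof is exactly the adjoint-problem scheme you describe — the coefficient $a=(v^m-u^m)/(v-u)$, the regularised backward problem $\partial_t\varphi_\varepsilon+a_\varepsilon\Delta\varphi_\varepsilon=-\theta$, the uniform bound on $\int a_\varepsilon(\Delta\varphi_\varepsilon)^2$ obtained by testing with $\Delta\varphi_\varepsilon$, and the weighted Cauchy--Schwarz control of the error term. So the approach is the intended one and the mechanics you give are correct.

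The one point that needs repair is your fallback on boundedness of $u$ and $v$. The lemma is stated, and applied in Lemma \ref{comparison lemma}, for $u,v$ that are merely in $L^2$ with $u^m,v^m\in L^2$; only in Lemma \ref{super comparison} are they continuous and hence bounded, so as written your error estimate does not cover the lemma in the generality in which it is used. Fortunately boundedness is not needed. Since $u,v\ge 0$ and $u^m,v^m\in L^2(\Omega_T)$, one has $w=v-u\in L^{2m}(\Omega_T)$, and the mean value theorem gives $0\le a\le m\,(u^{m-1}+v^{m-1})\in L^{2m/(m-1)}(\Omega_T)$. The exponents $m$ and $m/(m-1)$ are conjugate, so with $a_\varepsilon=\tilde a_{\delta(\varepsilon)}+\varepsilon$ one gets
\[
\int_{\Omega_T} w^2\,\frac{(a-a_\varepsilon)^2}{a_\varepsilon}\,dx\,dt
\;\le\; \frac{2}{\varepsilon}\,\|w\|_{L^{2m}}^{2}\,\big\|a-\tilde a_{\delta(\varepsilon)}\big\|_{L^{2m/(m-1)}}^{2}
\;+\;2\varepsilon\,\|w\|_{L^{2}}^{2},
\]
and since mollification converges in $L^{2m/(m-1)}$ you may choose $\delta(\varepsilon)$ so that the first term is $O(\varepsilon)$. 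With this exponent bookkeeping in place of the $L^\infty$ bounds, your argument proves the lemma as stated.
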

The proof of this lemma can be found in \cite[Theorem 6.5]{vazquez}. Next, we will show that the comparison principle for very weak solutions follows from this lemma.

\begin{lemma}\label{comparison lemma}
  Let $u,v \in L^2(\Omega_T)$ be very weak solutions to the boundary
  value problem \eqref{eq:BVP} with boundary and initial data $g, u_0$
  and $h,v_0$ respectively. Suppose that $u^m, v^m \in
  L^2(\Omega_T)$. If $u_0 \ge v_0$ in $\Omega$ and $g\ge h$ on
  $\Sigma_T$, then $u\ge v$ in $\Omega_T$.
\end{lemma}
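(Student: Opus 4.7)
The plan is to reduce directly to Lemma \ref{comparison tool} by subtracting the two very weak formulations and using the sign of the boundary data to produce a nonnegative right-hand side.

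More precisely, fix a smooth test function $\eta$ that vanishes on $\Sigma_T$ and at $t=T$. Inserting $\eta$ into Definition \ref{def: very weak} for $u$ (with data $g, u_0$) and for $v$ (with data $h, v_0$) and subtracting the first identity from the second yields
\[
\int_{\Omega_T}\bigl((v^m-u^m)\Delta\eta+(v-u)\eta_t\bigr)\,dx\,dt
= \int_\Omega (u_0-v_0)\,\eta(x,0)\,dx + \int_{\Sigma_T}(h-g)\,\bd_\nu \eta\,dS\,dt.
\]
If we additionally take $\eta\ge 0$, then $\eta(x,0)\ge 0$ while the fact that $\eta\equiv 0$ on $\Sigma_T$ with $\eta\ge 0$ inside forces $\bd_\nu \eta\le 0$ on $\Sigma_T$ (outward normal). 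Combined with $u_0\ge v_0$ and $g\ge h$, both integrals on the right are nonnegative. Thus
\[
\int_{\Omega_T}\bigl((v^m-u^m)\Delta\eta+(v-u)\eta_t\bigr)\,dx\,dt \ge 0
\]
for the relevant class of nonnegative test functions. The hypothesis $u^m,v^m\in L^2(\Omega_T)$ (and hence $u,v\in L^2(\Omega_T)$) matches the regularity required in Lemma \ref{comparison tool}, and applying that lemma yields $u\ge v$ in $\Omega_T$.

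The only real subtlety to check is that the subtracted identity actually holds for the class of test functions needed in Lemma \ref{comparison tool}, which, in the Ole\u\i nik/Ar\-onson style construction of \cite[Theorem 6.5]{vazquez}, are obtained as solutions of a suitable backward parabolic equation with prescribed smooth data vanishing on $\Sigma_T$ and at $t=T$. Such test functions are nonnegative, so the sign computation above applies. The only potential obstacle is therefore a density/approximation step to enlarge the admissible $\eta$ from $C_0^\infty$ to the backward-in-time test functions used in Lemma \ref{comparison tool}; this is handled by a standard mollification in time and cutoff argument away from $\Sigma_T$ and $t=T$, and then passage to the limit using the $L^2$ assumptions on $u^m, v^m$.
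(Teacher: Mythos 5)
Your proposal is correct and follows essentially the same route as the paper: subtract the two very weak formulations, restrict to nonnegative test functions so that $\eta(x,0)\ge 0$ and $\partial_\nu\eta\le 0$ on $\Sigma_T$ make the boundary terms nonnegative, and then invoke Lemma \ref{comparison tool}. Your closing remark about checking that the Ole\u\i nik-type test functions used in Lemma \ref{comparison tool} are indeed nonnegative (so that the sign argument applies to the relevant class) is a point the paper leaves implicit, and it is a worthwhile clarification rather than a deviation.
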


\begin{proof}
By the definition of very weak solutions, 
\begin{align*}
  \int_{\Omega_T} \big (-u^m \Delta \ph - u \ph_t\big ) \,dx \, dt + \int_{\Sigma_T} g^m \partial_\nu \ph \,dS \, dt - \int_\Omega u_0(x) \ph(x,0) \,dx = 0
\end{align*}
and
\begin{align*}
  \int_{\Omega_T} \big (-v^m \Delta \ph - v \ph_t\big ) \,dx \, dt + \int_{\Sigma_T} h^m \partial_\nu \ph \,dS \, dt - \int_\Omega v_0(x) \ph(x,0) \,dx = 0
\end{align*}
for every smooth $\ph$ vanishing on $\Sigma_T$.  Subtracting the
equalities gives
\begin{align}\label{subtracting}
 & \int_{\Omega_T} \big ((v^m-u^m) \Delta \ph +(v- u) \ph_t\big) \,dx \, dt - \int_{\Sigma_T} (h^m-g^m) \partial_\nu \ph \,dS \, dt \nonumber\\
&+ \int_\Omega (v_0(x)-u_0(x)) \ph(x,0) \,dx = 0.\nonumber\\
\end{align}
Now suppose $\ph\ge 0 $ in $\Omega_T$. Since $u_0\ge v_0$ on $\Omega$, we see that 
\[
\int_\Omega (v_0-u_0) \ph \,dx\le 0.
\] 
The function $\ph$ vanishes on the lateral boundary $\Sigma_T$, so $\partial_\nu \ph \le 0$, and since $g\ge h$ on $\Sigma_T$, we have
\[
\int_{\Sigma_T} (h-g)\partial_\nu \ph \, dS \, dt\ge 0.
\]
Using the estimates above, we conclude
\begin{equation}\nonumber
\int_{\Omega_T} \big((v^m-u^m) \Delta \ph +(v- u) \ph_t\big) \,dx \, dt \ge 0.
\end{equation}
Now we may apply Lemma \ref{comparison tool} to conclude the proof. 
\end{proof}

\begin{remark}\label{finite union of cylinders}
  The comparison principle in Lemma \ref{comparison lemma} holds also
  for finite unions of space-time cylinders $K=\cup_{i=1}^N
  U^i_{t_1^i,t_2^i}.$ This can be proved by considering an enumeration
  $s_k$ of the times $t_j^i$, $i=1,\ldots, N$, $j=1,2$, where
  $s_1<s_2<\ldots <s_M$ and proving the result inductively for the
  sets $K\cap (\rn\times [s_i , s_{i+1}])$ using Lemma \ref{comparison
    lemma}.
\end{remark}

We use the following lemma from \cite{perron} to bypass the fact that
we may not add constants to solutions. We will present the proof for
the reader's convenience.

\begin{lemma}\label{boundary value lemma}
  Suppose $g$ is a continuous, non-negative function in $\overline{\Omega_T}$, such that $g\in L^2 (0,T; H^1(\Omega))$ and suppose $u_0\in H^1(\Omega)$ is non-negative. Let $\eps\in (0,1)$. Denote by $g_\eps = g + \eps$ and $u_{0,\eps}=u_0+\eps$. Let $u$ and $u_\eps$ be a weak solutions to \eqref{eq:BVP} with boundary and initial data $g, u_0$ and $g_\eps, u_{0,\eps}$ respectively. Then
\begin{align*}
  \int_{\Omega_T} (u_\eps-u)(u_\eps^m-u^m) \,dx \, dt \le \eps |\Omega_T| (M+1) + \eps|\Omega_T| (M+1)^m,
\end{align*}
where $M=\max \{\sup_{\overline{\Omega_T}} g, \sup_{\Omega} u_0 \}$. 
\end{lemma}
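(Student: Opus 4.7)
The plan is to subtract the weak formulations for $u$ and $u_\eps$ and test with a modified Ole\u\i nik function that accounts for the boundary shift. Since $u_\eps^m - g_\eps$ and $u^m - g$ both belong to $L^2(0,T;H_0^1(\Omega))$ and $g_\eps - g = \eps$, the difference $u_\eps^m - u^m - \eps$ lies in $L^2(0,T;H_0^1(\Omega))$. I would therefore take
\[
\varphi(x,t) = \begin{cases} \displaystyle \int_t^T \bigl(u_\eps^m(x,s) - u^m(x,s) - \eps\bigr)\, ds, & 0 \le t \le T, \\ 0, & \text{otherwise,} \end{cases}
\]
as the test function; it vanishes on $\Sigma_T$ in the Sobolev sense and at $t=T$. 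Its lack of smoothness in $t$ is handled, as usual, by a Steklov average, following the recipe alluded to in the proof of Lemma~\ref{weak solution lemma}.

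Subtracting the two weak formulations and inserting $\varphi$, together with $\varphi_t = -(u_\eps^m - u^m - \eps)$, produces
\begin{align*}
&\int_{\Omega_T}(u_\eps - u)(u_\eps^m - u^m - \eps)\, dx\, dt + \int_{\Omega_T} \nabla(u_\eps^m - u^m) \cdot \nabla \varphi\, dx\, dt \\
&\qquad = \eps \int_\Omega \varphi(x,0)\, dx.
\end{align*}
Setting $F(x,t) = \int_t^T \nabla(u_\eps^m - u^m)(x,s)\, ds$, so that $\partial_t F = -\nabla(u_\eps^m - u^m)$ and $F(\cdot, T) = 0$, the spatial-gradient term equals $\tfrac{1}{2}\int_\Omega |F(x,0)|^2\, dx \ge 0$. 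Rearranging and using $\eps \int_\Omega \varphi(x,0)\, dx \le \eps \int_{\Omega_T}(u_\eps^m - u^m)\, dx\, dt$ gives
\[
\int_{\Omega_T}(u_\eps - u)(u_\eps^m - u^m)\, dx\, dt \le \eps \int_{\Omega_T}(u_\eps - u)\, dx\, dt + \eps \int_{\Omega_T}(u_\eps^m - u^m)\, dx\, dt.
\]

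To finish, I would apply Lemma~\ref{comparison lemma} (valid for $u$ and $u_\eps$ since weak solutions are very weak solutions after one integration by parts) against the constant $M+1$, which is trivially a weak solution whose initial and lateral data dominate those of both $u$ and $u_\eps$ because $\eps < 1$. This yields $0 \le u, u_\eps \le M+1$, so that $u_\eps - u \le M+1$ and $u_\eps^m - u^m \le (M+1)^m$ pointwise, and integration over $\Omega_T$ completes the bound. The only genuine obstacle is the non-smoothness in time of the Ole\u\i nik-type test function, which forces the Steklov averaging; the remaining steps are routine manipulations anchored in the non-negativity of the gradient term.
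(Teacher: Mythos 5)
Your argument is correct and follows essentially the same route as the paper: subtract the two weak formulations, test with the shifted Ole\u\i nik function $\int_t^T(u_\eps^m-u^m-\eps)\,ds$, discard the non-negative gradient term, and bound the remaining integrals via the comparison principle $0\le u\le u_\eps\le M+1$. The only cosmetic differences are that you make the Steklov-averaging step explicit and bound $u_\eps^m-u^m\le u_\eps^m\le(M+1)^m$ directly, which is in fact cleaner than the paper's intermediate inequality.
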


\begin{proof}
  
Since $u$ and $u_\eps$ are weak solutions, the  equalities 
\begin{align*}
  &\int_{\Omega_T} \big ( -u \ph_t + \nabla u^m \cdot \nabla \ph \big ) \, dx \, dt = \int_\Omega u_0(x) \ph(x,0) \, dx \quad \text{and}\\
 &\int_{\Omega_T} \big ( -u_\eps \ph_t + \nabla u_\eps^m \cdot \nabla \ph \big ) \, dx \, dt = \int_\Omega u_{0,\eps}(x) \ph(x,0) \, dx.
\end{align*}
hold.  Now a subtraction gives
\[
\int_{\Omega_T} \big ( -(u_\eps-u) \ph_t + \nabla (u_\eps^m-u^m) \cdot \nabla \ph \big ) \, dx \, dt = \int_\Omega (u_{0,\eps}-u_0) \ph(x,0) \, dx.
\]
We will use an Ole\u\i nik type test function defined as
\[
\ph(x,t)=
\begin{cases}
  \int_t^T (u_\eps^m-u^m-\eps) \, ds, \quad t\in[0,T),\\
0 \quad \text{otherwise}.
\end{cases}
\]
Now $\ph$ has the properties
\[
\ph_t = -(u_\eps^m - u^m-\eps) \quad \text{and} \quad \nabla \ph = \int_t^T \nabla (u_\eps^m -u^m) \, ds.
\] 
Thus
\begin{align*}
  &\int_{\Omega_T} \left ( (u_\eps-u)(u_\eps^m - u^m - \eps) + \nabla (u_\eps^m-u^m) \cdot \left(\int_t^T \nabla (u_\eps^m -u^m) \, ds\right) \right ) \, dx \, dt \\
&= \int_\Omega (u_{0,\eps}-u_0)  \left(\int_0^T (u_\eps^m-u^m-\eps)\,ds \right) \, dx.
\end{align*}
We observe that 
\begin{align*}
  &\int_{\Omega_T} \nabla (u_\eps^m-u^m) \cdot \left(\int_t^T \nabla (u_\eps^m -u^m) \, ds \right) \,dx \, dt \\
&= \frac{1}{2} \int_\Omega \left ( \int_0^T (\nabla u_\eps^m - \nabla u^m) \, ds \right )^2 \, dx \ge 0 \quad \text{and} \\
&-\eps T\int_\Omega (u_{0,\eps}-u_0) \, dx \le 0.
\end{align*}
Hence, we have the estimate
\begin{align}\label{boundaryvalueupperbound}
 & \int_{\Omega_T} (u_\eps - u) (u_\eps^m-u^m) \, dx \, dt \nonumber\\
&\le \eps \int_{\Omega_T} (u_\eps - u) \, dx \, dt + \int_\Omega (u_{0,\eps} - u_0 ) \left(\int_0^T (u_\eps^m - u^m) \, ds \right) \,dx.\nonumber\\
\end{align}
By the comparison principle $u\le M$ in $\Omega_T$ and thus by construction of $g_\eps$ and $u_{0,\eps}$, the comparison principle gives $u_\eps \le M+1$ in $\Omega_T$. Then the right hand side of \eqref{boundaryvalueupperbound} can be bounded from above using
\begin{align*}
 & u_\eps - u \le M+1, \\
&u_{0,\eps} - u_0 \le \eps \quad \text{and} \\
&u_\eps^m-u^m \le (u_\eps -u)^m \le (M+1)^m.
\end{align*}
We have
\[
   \int_{\Omega_T} (u_\eps - u) (u_\eps^m-u^m) \, dx \, dt 
\le \eps (M+1)|\Omega_T| + \eps (M+1)^m |\Omega_T|. \qedhere
\]
\end{proof}

For proving the equivalence of local weak and very weak
supersolutions, we need to consider solutions to the boundary value
problem \eqref{eq:BVP} when the functions $u_0$ and $g$ are only
assumed to be continuous. In such a case, the previous interpretation
of the boundary and initial conditions is no longer available, so we
use the notion of Perron solutions \cite{perron} instead. Perron
solutions are weak solutions in the interior, but the question whether
they attain the correct boundary values was left open in
\cite{perron}.  Next we show that this is indeed the case in
sufficiently smooth cylinders, by using a barrier argument. This
justifies calling the Perron solution the solution to the boundary
value problem~\eqref{eq:BVP}.



In order to construct a suitable lower barrier, we need to show the
existence of signed solutions to the boundary value problem with
smooth boundary values. This will be done in the next lemma. The proof
follows the ideas outlined in Chapter 5 of \cite{vazquez}.

\begin{lemma}\label{smooth boundary values}
  Let $\Omega_T=\Omega\times (0,T)$, where $\Omega\subset \rn$ is a
  bounded domain. Let $g$ be a smooth function defined in a
  neighbourhood of $\Sigma_T$ and let $u_0\in C^\infty (\Omega)$. Then
  there exists a weak solution to the boundary value problem
\[
\begin{cases}
  u_t-\Delta (|u|^{m-1} u) = 0 \quad \text{on } \Omega_T,\\
u(x,0)=u_0(x),\\
u^m=g \quad \text{on } \Sigma_T,
\end{cases}
\]
\end{lemma}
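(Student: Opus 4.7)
The plan is to obtain the signed solution as the vanishing viscosity limit of classical solutions to a non-degenerate regularization. Set $\Phi(s) := |s|^{m-1}s$ and approximate it by the strictly monotone smooth function
\[
\Phi_\eps(s) := \Phi(s) + \eps s, \qquad \eps \in (0,1),
\]
which satisfies $\Phi_\eps'(s) \ge \eps > 0$ and is Lipschitz on any bounded set. After extending $g$ to a smooth function $G$ on $\overline{\Omega_T}$ compatible with $u_0$ at the corner $\bd \Omega \times \{0\}$, the regularized problem
\[
(u_\eps)_t - \Delta \Phi_\eps(u_\eps) = 0 \text{ in } \Omega_T, \quad u_\eps(\cdot,0)=u_0, \quad \Phi_\eps(u_\eps) = G \text{ on } \Sigma_T
\]
is uniformly parabolic with smooth, compatible data, and classical quasilinear parabolic theory (Ladyzhenskaya--Solonnikov--Ural'tseva) provides a smooth solution $u_\eps \in C^{2,1}(\overline{\Omega_T})$.

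Next I would derive uniform-in-$\eps$ estimates. The maximum principle gives an $L^\infty$ bound $\|u_\eps\|_\infty \le C$ depending only on the data. Testing the equation with $\Phi_\eps(u_\eps) - G$ and using the identity $\int (u_\eps)_t\, \Phi_\eps(u_\eps) \, dx = \partial_t \int \Psi_\eps(u_\eps) \, dx$, where $\Psi_\eps(s) := \int_0^s \Phi_\eps(\sigma)\,d\sigma$, produces the energy estimate
\[
\sup_{t \in (0,T)} \int_\Omega \Psi_\eps(u_\eps)\,dx + \int_{\Omega_T} |\nabla \Phi_\eps(u_\eps)|^2 \,dx\,dt \le C,
\]
with $C$ independent of $\eps$. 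Reading $(u_\eps)_t = \Delta \Phi_\eps(u_\eps)$ off the equation then gives $(u_\eps)_t$ uniformly bounded in $L^2(0,T;H^{-1}(\Omega))$.

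The main obstacle is passing to the limit in the nonlinear term. The $L^\infty$ bound yields, along a subsequence, a weak-$\ast$ limit $u_\eps \rightharpoonup u$ in $L^\infty(\Omega_T)$, and from the energy estimate $\Phi_\eps(u_\eps) \rightharpoonup w$ weakly in $L^2(0,T;H^1(\Omega))$. The task is to identify $w = \Phi(u)$. I would establish strong $L^2(\Omega_T)$ convergence of $\Phi_\eps(u_\eps)$ by a Kolmogorov--Riesz / Aubin--Lions type argument: spatial translations are controlled by the $L^2(H^1)$ bound, while time translations are controlled by combining the $L^2(H^{-1})$ bound on $(u_\eps)_t$ with the elementary monotonicity estimate $(\Phi_\eps(a) - \Phi_\eps(b))^2 \le L \,(\Phi_\eps(a) - \Phi_\eps(b))(a-b)$, valid for $|a|,|b|\le M$ by the Lipschitz character of $\Phi_\eps$. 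A standard Minty-type argument then identifies $w = \Phi(u)$ almost everywhere.

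Finally I would pass to the limit $\eps \to 0$ in the weak formulation to conclude that $u$ is a weak solution in the signed sense. The lateral condition $\Phi(u) = g$ on $\Sigma_T$ survives because $\Phi_\eps(u_\eps) - G$ lies in the weakly closed space $L^2(0,T;H^1_0(\Omega))$, and the initial condition is recovered from the $C([0,T];H^{-1}(\Omega))$-continuity of $u_\eps$ afforded by the $L^2(H^{-1})$ bound on $(u_\eps)_t$ together with the identification of the limit. This construction follows the outline in \cite[Chapter 5]{vazquez} to which the lemma statement refers.
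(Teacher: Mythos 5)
Your overall strategy---regularize the degenerate nonlinearity to make the problem uniformly parabolic, solve the regularized problem classically via Ladyzhenskaya--Solonnikov--Ural'tseva, derive uniform energy estimates, extract a limit by compactness, and identify the limit of the nonlinear term---is exactly the route the paper takes, so the proposal is essentially correct. The concrete differences are in the details. First, the paper does not use the additive viscosity $\Phi_\eps(s)=\Phi(s)+\eps s$; instead it sets $\phi_n(s)=n^{-m}\phi_1(ns)$ where $\phi_1$ is smooth, odd, equal to $cs$ near the origin, and equal to $\phi(s)=|s|^{m-1}s$ for $|s|\ge 1$. This matters: for $1<m\le 2$ your $\Phi_\eps$ is only $C^1$ at $s=0$ (since $\Phi''(s)\sim|s|^{m-2}$), so the claimed $C^{2,1}(\overline{\Omega_T})$ regularity from quasilinear theory does not come for free in the signed setting where $u_\eps$ may cross zero; the paper's $\phi_n$ are genuinely smooth with $\phi_n'>0$ everywhere and sidestep this entirely. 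Second, for time compactness the paper does not go through the $L^2(0,T;H^{-1})$ bound on $(u_n)_t$ and Aubin--Lions; instead it tests the equation a second time with $\zeta(t)(\phi_n(u_n)-g)_t$ to obtain a direct uniform $L^2$ bound on $\partial_t\phi_n(u_n)$ away from $t=0,T$, yielding an $H^1$ bound on $\phi_n(u_n)$ and then pointwise a.e.\ convergence of a subsequence. Third, the Minty step is unnecessary: once $\phi_n(u_n)$ converges a.e.\ and $u_n$ is uniformly bounded and converges a.e., the uniform convergence $\phi_n\to\phi$ on compacts identifies the limit as $\phi(u)$ directly. Your version can be repaired (mollify $\Phi_\eps$, or accept that strong $L^2$ convergence already gives a.e.\ identification), but as written the regularity claim for $u_\eps$ is the one step that would need patching.
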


\begin{proof}
Let $\phi(s)=|s|^{m-1}s$. Define a smooth function $\phi_1$ such that
\[
\phi_1(s)=
\begin{cases}
  \phi(s) \quad \text{for } |s|\ge 1,\\
cs \hfill \text{for } |s|\le \frac{1}{2},\\
\end{cases}
\]
$\phi_1$ is convex for $s\ge 0$ and $\phi_1(-s)=-\phi_1(s)$. Now define $\phi_n(s)=n^{-m} \phi_1(n s)$, where $n=1,2,\ldots$. Then $\phi_n(s) = \phi(s)$ for $|s|\ge \frac{1}{n}$ and $\phi_n'(s) >0$ for every $s\in \re$. Moreover $\phi_n \rightarrow \phi$ uniformly on compact sets. We consider the approximate problem
\begin{equation}
  \label{eq:approximation}
  \begin{cases}
   u_t-\Delta \phi_n(u) = 0 \quad \text{on } \Omega_T,\\
u(x,0)=u_0(x),\\
\phi(u)=g \quad \text{on } \Sigma_T.
  \end{cases}
\end{equation}
By the quasilinear regularity theory (see \cite{quasilinear}), there exists a smooth solution $u_n$ to \eqref{eq:approximation}. Moreover, by the maximum principle we have
\[
-N \le u_n(x,t) \le M \quad \text{in } \Omega_T,
\]
where $N=\max\{\sup (-u_0), \sup(-g)\}$ and $M=\max\{\sup (u_0), \sup(g)\}$.
We multiply the equation $(u_n)_t-\Delta \phi_n(u_n)=0$ by a test function $\phi_n(u_n)-g\in L^2(0,T;H_ 0^1(\Omega))$ and integrate by parts to get
\[
\int_{\Omega_T} (u_n)_t (\phi_n(u_n)-g) \, dx \, dt + \int_{\Omega_T} \nabla \phi_n(u_n)\big ( \nabla \phi_n(u_n)-\nabla g\big )\,dx \, dt = 0. 
\]
Therefore
\begin{align}
  \label{eq:phigradient}
  \int_{\Omega_T} |\nabla \phi_n(u_n)|^2 \, dx \, dt =& \int_{\Omega_T} \nabla \phi_n(u_n) \cdot \nabla g \, dx \, dt - \int_{\Omega_T} (u_n)_t \phi_n(u_n)\, dx \, dt \nonumber \\
&+ \int_{\Omega_T} (u_n)_t g \, dx \, dt.\nonumber \\  
\end{align}
Let $\Psi_n$ denote the primitive of $\phi_n$, defined as 
\[
\Psi_n(s)=\int_0^s \phi_n(t) \, dt. 
\]
We observe that 
\[
(\Psi_n(u_n))_t = (u_n)_t \Psi_n' (u_n) = (u_n)_t \phi_n(u_n),
\]
and thus 
\begin{equation} \label{primitive}
\int_{\Omega_T} (u_n)_t \phi_n(u_n)\, dx \, dt = \int_\Omega \Psi_n(u_n(x,T)) \, dx  - \int_\Omega \Psi_n(u_0(x)) \, dx. 
\end{equation}
To control the last term on the right hand side of \eqref{eq:phigradient}, we integrate by parts to get
\begin{equation}\label{u_n time derivative}
 \int_{\Omega_T} (u_n)_t g \, dx \, dt = -\int_{\Omega_T} u_n g_t \, dx \, dt + \int_\Omega u_n(x,T) g(x,T)\, dx - \int_\Omega u_0(x) g(x,0)\, dx.
\end{equation}
Collecting the facts from \eqref{eq:phigradient}, \eqref{primitive} and \eqref{u_n time derivative} and using Young's inequality gives us an upper bound for the $L^2$-norm of the gradient
\begin{align*}
\int_{\Omega_T} | \nabla \phi_n(u_n)|^2 \, dx \, dt \le& C \bigg(\int_{\Omega_T}| \nabla g|^2 \, dx \, dt + \int_\Omega |\Psi_n(u_0(x))| \, dx \\
&+ \int_\Omega |\Psi_n(u_n(x,T))| \, dx+ \int_{\Omega_T} |u_n| |g_t| \, dx \, dt \\
&+ \int_\Omega |u_n(x,T)| |g(x,T)| \, dx + \int_\Omega |u_0(x)| |g(x,0)|\, dx \bigg).
\end{align*}
Thus $\nabla \phi_n(u_n)$ is uniformly bounded in $L^2(\Omega_T)$. In order to control the time derivative $(\phi_n(u_n))_t$, we multiply the equation $(u_n)_t-\Delta \phi_n(u_n)=0$ by the test function $\zeta(t)(\phi_n(u_n)-g)_t$, where $\zeta(t)$ is a smooth cut-off function, such that $0\le \zeta \le 1$, $\zeta(t)=1$ for $t\in(\eps,T-\eps)$, and $\zeta(0) = \zeta(T)=0$. Integrating by parts gives 
\[
\int_{\Omega_T} \zeta \big( \phi_n(u_n)-g\big)_t (u_n)_t \, dx \, dt = - \int_{\Omega_T} \zeta \nabla \phi_n(u_n)\cdot \nabla \big(\phi_n(u_n)-g\big)_t \, dx \, dt,
\]
which can be written as 
\begin{align}\label{phi_n time derivative}
\int_{\Omega_T} \zeta \phi_n(u_n)_t (u_n)_t \, dx \, dt =& \int_{\Omega_T} \zeta g_t u_t\, dx \, dt - \int_{\Omega_T} \zeta \nabla \phi_n(u_n)\cdot \nabla\phi_n(u_n)_t \, dx \, dt \nonumber\\ 
&+ \int_{\Omega_t} \zeta \nabla \phi_n(u_n)\cdot \nabla g \, dx \, dt \nonumber\\
=& I_1 + I_2 + I_3. \nonumber\\
\end{align}
We integrate $I_1$ by parts in the time variable to get
\[
I_1=\int_{\Omega_T} \zeta g_t u_t\, dx \, dt = - \int_{\Omega_T} \big(\zeta g_t\big)_t u_n \, dx \, dt.
\]
Integrating $I_2$ by parts gives
\begin{align*}
I_2=&- \int_{\Omega_T} \zeta \nabla \phi_n(u_n)\cdot \nabla \phi_n(u_n)_t \, dx \, dt \\=& \int_{\Omega_T} \big ( \zeta \nabla \phi_n(u_n)\big)_t \cdot \nabla \phi_n(u_n)\, dx \, dt \\
=& \int_{\Omega_T} \zeta' |\nabla \phi_n(u_n)|^2 \, dx \, dt 
+ \int_{\Omega_T} \zeta \nabla \phi_n(u_n) \cdot \nabla \phi_n(u_n)_t \, dx \, dt,
\end{align*}
and therefore
\[
I_2=\frac{1}{2}\int_{\Omega_T} \zeta' |\nabla \phi_n(u_n)|^2 \, dx \, dt.
\]
Finally, $I_3$ can be bounded by
\begin{align*}
|I_3|&=\left|\int_{\Omega_t} \zeta \nabla \phi_n(u_n)\cdot \nabla g \, dx \, dt\right|\\
&\le \left(\int_{\Omega_T} \zeta^2 |\nabla g|^2 \,dx \, dt\right )^{1/2}\left( \int_{\Omega_T} |\nabla \phi_n(u_n)|^2 \, dx \, dt\right)^{1/2}.
\end{align*}
Since $u_n$ is bounded, $\phi'_n(u_n)\le C$ for some $C$. Thus by \eqref{phi_n time derivative}, we get
\begin{align*}
  \int_{\Omega_{\eps,T-\eps}} |\phi_n(u_n)_t|^2 \, dx \, dt &=   \int_{\Omega_{\eps,T-\eps}} |(u_n)_t\phi'_n(u_n)|^2 \, dx \, dt \le  \int_{\Omega_T} \zeta|(u_n)_t^2\phi'_n(u_n)| \, dx \, dt\\
&\le C\big(|I_1| + |I_2| + |I_3|\big ).
\end{align*}
Hence, $\phi_n(u_n)_t$ is uniformly bounded in $L^2(\Omega \times (\eps, T-\eps))$. In conclusion, $\phi_n(u_n)$ is uniformly bounded in $H^1(\Omega\times (\eps, T-\eps))$. By compactness, there exists a subsequence $\phi_{n_j}(u_{n_j}) \rightarrow w \in L^2(\Omega\times (\eps, T-\eps))$ almost everywhere. It follows that $w \in L^2((0,T), H^1(\Omega))$. The sequence $u_{n_j}$ is uniformly bounded, so it converges to some $u$ almost everywhere (taking a subsequence, if necessary) and $\phi_{n_j}(u_{n_j}) \rightarrow \phi(u)$ almost everywhere. Therefore $w=\phi(u)$ almost everywhere. Since $u_n$ is a classical solution to  \eqref{eq:approximation}, it satisfies 
\[
\int_{\Omega_T} (-u_n \ph_t + \nabla \phi_n(u_n)\cdot \nabla \ph) \, dx \, dt = \int_\Omega u_0(x)\ph(x,0)\, dx.
\]
By weak compactness, $\nabla\phi_n(u_n)\rightarrow \nabla\phi(u)$ weakly, thus showing that indeed $u$ is a weak solution to the problem.
\end{proof}

We are now ready to show that Perron solutions attain the correct
boundary values in the classical sense. 


\begin{lemma}\label{existence lemma}
  Let the functions $g\in C(\overline{\Omega_T})$ and $u_0\in
  C(\overline{\Omega})$ be non-negative and compatible. Then the
  Perron solution to the boundary value problem \eqref{eq:BVP} attains
  the correct boundary values continuously.
\end{lemma}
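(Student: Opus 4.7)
The plan is a barrier construction. The Perron solution already coincides with its upper and lower Perron variants on the interior by the resolutivity contained in \cite{perron}, so continuous attainment of the data at a point $(x_0, t_0) \in \bd_p \Omega_T$ reduces to the one-sided estimates $\limsup \overline{H}_\varphi \leq \varphi(x_0, t_0)$ and $\liminf \underline{H}_\varphi \geq \varphi(x_0, t_0)$, where $\overline{H}_\varphi$ and $\underline{H}_\varphi$ denote the upper and lower Perron solutions associated with $\varphi$. Each of these is supplied by a nearby weak solution with smooth boundary data coming from Lemma \ref{smooth boundary values}.

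For the upper bound, fix $\eta > 0$ and use uniform continuity of $\varphi$ on the compact set $\bd_p \Omega_T$ to select compatible smooth data $\varphi_\eta^+$ with $\varphi \leq \varphi_\eta^+ \leq \varphi + \eta$ and $\varphi_\eta^+(x_0, t_0) \leq \varphi(x_0, t_0) + \eta$. Lemma \ref{smooth boundary values} then produces a weak solution $w_\eta^+$ with these boundary data, and the maximum principle used in that lemma forces $w_\eta^+ \geq 0$, so it is a genuine solution of \eqref{eq:BVP} with nonnegative smooth data. Classical boundary regularity for the porous medium equation on smooth cylinders (see \cite{daskalopoulos, vazquez, nonlinear diffusion}) ensures that $w_\eta^+$ extends continuously to $\overline{\Omega_T}$ and takes the prescribed values. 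Since $w_\eta^+ \geq \varphi$ on $\bd_p \Omega_T$ and $w_\eta^+$ is a continuous weak solution (hence a supersolution), it is admissible in the upper Perron class, giving $\overline{H}_\varphi \leq w_\eta^+$ throughout $\Omega_T$. Letting $(x,t) \to (x_0, t_0)$ and then $\eta \to 0$ produces the desired upper bound.

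For the lower bound, the case $\varphi(x_0, t_0) = 0$ is immediate because $\underline{H}_\varphi \geq 0$ by the nonnegativity convention. When $\varphi(x_0, t_0) > 0$, continuity of $\varphi$ lets me pick compatible smooth data $\varphi_\eta^-$ with $0 \leq \varphi_\eta^- \leq \varphi$ and $\varphi_\eta^-(x_0, t_0) \geq \varphi(x_0, t_0) - \eta$. Lemma \ref{smooth boundary values} combined with the boundary regularity cited above then delivers a nonnegative continuous weak solution $w_\eta^-$ attaining $\varphi_\eta^-$ on $\bd_p \Omega_T$, which is therefore an admissible lower Perron function, so $\underline{H}_\varphi \geq w_\eta^-$. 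The symmetric limit procedure yields $\liminf \underline{H}_\varphi \geq \varphi(x_0, t_0)$, and together with the upper estimate this gives continuous attainment of $\varphi$.

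The principal obstacle is not the Perron machinery but the boundary regularity step: Lemma \ref{smooth boundary values} only produces weak solutions, and to deploy them as barriers I must know that they take their prescribed smooth boundary values in the classical pointwise sense on the whole of $\bd_p \Omega_T$. This is precisely where the hypothesis that $\bd \Omega$ is sufficiently smooth (or Lipschitz) is used, via a known Hölder boundary regularity result for the porous medium equation. A secondary, more elementary issue is producing the smooth approximations $\varphi_\eta^\pm$ so that they are simultaneously compatible, correctly ordered, and close to $\varphi(x_0, t_0)$ at $(x_0, t_0)$; this requires some care at the parabolic corner $\bd \Omega \times \{0\}$ but is a routine mollification argument given the continuity of $\varphi$ on $\bd_p \Omega_T$.
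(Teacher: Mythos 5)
Your overall strategy is the same as the paper's: sandwich the Perron solution between an upper and a lower barrier, each of which is a weak solution with data close to $\varphi$ at the chosen boundary point and continuous up to $\bd_p\Omega_T$ by the known boundary regularity (the paper cites \cite{ziemer} for this step). The genuine difference is in the lower barrier. Because constants cannot be subtracted from solutions of the PME, the paper builds $v^-$ \emph{locally}: it solves the equation in a small cylinder $E\cap\Omega_T$ around $\xi$ with smooth \emph{signed} data equal to $\varphi(\xi)-\eps$ at $\xi$ and to $-k$ away from $\xi$, and then extends by $-k$; this is the entire reason Lemma \ref{smooth boundary values} is stated for sign-changing solutions. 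You instead take a \emph{global} nonnegative smooth minorant $\varphi_\eta^-\le\varphi$ of the data which is close to $\varphi(x_0,t_0)$ at the point of interest, and use the corresponding global solution as the lower Perron function. This works here: since $\varphi$ is continuous on all of $\bd_p\Omega_T$ and every boundary point of the smooth cylinder is regular, the global solution attains $\varphi_\eta^-\le\varphi$ everywhere on $\bd_p\Omega_T$ and hence lies in $\mathfrak{L}_\varphi$, while still being large near $(x_0,t_0)$. Your route is arguably more economical (it makes the signed part of Lemma \ref{smooth boundary values} unnecessary for this lemma), whereas the paper's local signed construction is the more robust one: it uses only information near $\xi$ and survives in situations where a global admissible minorant close to $\varphi(\xi)$ is not available. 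Two points deserve more care than you give them: (i) the approximating data must be smooth as data for $u^m$ as well, i.e.\ $(\varphi_\eta^-)^m$ must be admissible where $\varphi_\eta^-$ vanishes — this is fixable by making $\varphi_\eta^-$ vanish identically, or to infinite order, near its zero set, but it is not automatic from mollification; and (ii) the case $t_0=0$ and the corner $\bd\Omega\times\{0\}$ need the compatibility of the approximations, which you correctly flag as a routine but necessary check.
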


\begin{proof}
  
  We will show the claim by a barrier type argument. To simplify
  notation we write
  \[
  \ph(x,t)=
  \begin{cases}
    g(x,t)^{1/m}, \quad (x,t)\in \Sigma_T,\\
    u_0(x), \quad (x,t)\in \Omega \times \{0\}.
  \end{cases}
  \]
  Fix $\xi \in \bd_p \Omega_T$ and take $\eps>0$. We will show that
  there exists a supersolution $v^+ \in \mathfrak{U}_\ph$, such that
  $\lim_{z\rightarrow \xi}v^+(z)=\ph(\xi)+\eps$ and a subsolution $v^-
  \in \mathfrak{L}_\ph$, such that $\lim_{z\rightarrow
    \xi}v^-(z)=\ph(\xi)-\eps$. Here $\mathfrak{U}_\ph$ and
  $\mathfrak{L}_\ph$ denote the upper and lower Perron classes
  respectively.

  The upper barrier $v^+$ can be constructed by solving the boundary
  value problem \eqref{eq:BVP} with boundary values $\ph+\eps$. A
  continuous solution exists by the quasilinear theory, as described
  in the proof of the previous lemma. Moreover $v^+$ is continuous up
  to the boundary by \cite{ziemer}. In order to construct the lower
  barrier $v^-$, we will consider a small neighbourhood $E$ of
  $\xi$. Let $f$ be a smooth function, such that
  $f(\xi)=\ph(\xi)-\eps$ and $f=-k$ on $\bd_p(E\cap \Omega_T)$ outside
  a neighbourhood of $\xi$. By Lemma \ref{smooth boundary values},
  there exists a weak solution $\tilde{v}$ in $E\cap \Omega_T$ with
  boundary values $f$. We extend $\tilde{v}$ to the whole $\Omega_T$
  by defining
  \[
  v^- =
  \begin{cases}
    \max \{\tilde{v}, -k\} \quad \text{in } E\cap \Omega_T,\\
    -k \hfill \text{in } \Omega_T\setminus E.
  \end{cases}
  \]
  By choosing $k$ large enough, we have $v^- \in \mathfrak{L}_\ph$ and
  $v^-=\tilde{v}$ in $E\cap \Omega_T$. Again, the continuity of $v^-$
  up to the boundary is provided by \cite{ziemer}.

  By the definition of the Perron solution, $v^- \le u \le v^+$ and
  thus
  \begin{displaymath}
    \ph(\xi)-\eps \le \liminf_{z\rightarrow \xi} u(z) \le
    \limsup_{z\rightarrow \xi} u(z)\leq \ph(\xi)+\eps.
  \end{displaymath}
  Since this holds for every $\eps>0$, we conclude that
  $\lim_{z\rightarrow \xi} u(z)=\ph(\xi)$. 
\end{proof}

We are now ready to prove the first of the main results, the
equivalence of the different notions of solutions to the boundary
value problem. We emphasize the fact that the boundary and initial
values are only assumed to be continuous.
\begin{theorem}\label{thm: solutions}
  Let $u$ be the Perron solution and $v$ a very weak solution to the
  boundary value problem
  \[
  \begin{cases}
    u_t-\Delta u^m = 0 \quad \text{on } \Omega_T,\\
    u(x,0)=u_0(x),\\
    u^m=g \quad \text{on } \Sigma_T,
  \end{cases}
  \]
  with continuous, compatible boundary values $u_0$ and $g$.  If
  $v^m\in L^2(\Omega_T)$, then $u=v$.
\end{theorem}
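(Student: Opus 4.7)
The plan is to apply the comparison principle for very weak solutions, Lemma~\ref{comparison lemma}, to $v$ together with a sequence of weak-and-very-weak solutions approximating $u$ from above by smooth data, and then to repeat the symmetric argument from below. A direct application of Lemma~\ref{comparison lemma} to $u$ and $v$ is not available, because $u$ is only known to attain the boundary data continuously (via Lemma~\ref{existence lemma}), not in any Sobolev sense, so a regularization step is needed.

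To show $u \ge v$, choose smooth, compatible, nonnegative functions $g_k \searrow g$ uniformly on $\overline{\Omega_T}$ and $u_{0,k} \searrow u_0$ uniformly on $\overline{\Omega}$; compatibility is preserved by first approximating the continuous boundary function $\ph$ from above and then taking $g_k = \ph_k^m$ on $\Sigma_T$ and $u_{0,k}=\ph_k|_{t=0}$. By Lemma~\ref{smooth boundary values} together with the uniqueness from Lemma~\ref{weak solution lemma}, there is a unique nonnegative weak solution $U_k$ with data $g_k, u_{0,k}$, continuous on $\overline{\Omega_T}$. Since $U_k^m - g_k \in L^2(0,T;H_0^1(\Omega))$, the lateral trace of $U_k^m$ equals $g_k$, and integrating the gradient term in the weak formulation by parts in space against any smooth $\eta$ vanishing on $\Sigma_T$ and at $t=T$ converts $U_k$ into a very weak solution with the same data. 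Since $U_k^m, v^m \in L^2(\Omega_T)$, $g_k \ge g$, and $u_{0,k} \ge u_0$, Lemma~\ref{comparison lemma} gives $U_k \ge v$ a.e.\ in $\Omega_T$.

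Next, pass to the limit $k\to\infty$. The sequence $\{U_k\}$ is nonincreasing by the same comparison principle and is bounded below by $v \ge 0$, so it converges pointwise to some $u_\ast \ge v$. Uniform $L^\infty$ bounds from the maximum principle and standard local Caccioppoli estimates for the PME yield uniform bounds on $U_k^m$ in $L^2_{\loc}(0,T;H^1_{\loc}(\Omega))$, so passing to the limit in the local weak formulation identifies $u_\ast$ as a local weak solution of~\eqref{eq:PME}. The uniform convergence $g_k \to g$ and $u_{0,k} \to u_0$, together with the continuity of each $U_k$ up to $\bd_p \Omega_T$ furnished by the barrier arguments of Lemma~\ref{existence lemma}, forces $u_\ast$ to be continuous up to the parabolic boundary and to attain the continuous data $u_0$ and $g$; hence $u_\ast$ is the Perron solution $u$. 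Letting $k \to \infty$ in $U_k \ge v$ yields $u \ge v$. The reverse inequality $u \le v$ follows from the symmetric construction, approximating $\ph$ from below by smooth, nonnegative, compatible data and producing solutions $W_k \le v$ with $W_k \nearrow u$.

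The principal obstacle is verifying that the monotone pointwise limit $u_\ast$ coincides with the Perron solution $u$. Interior weak-solution regularity transfers routinely by local energy estimates, but establishing the continuous attainment of the original boundary data by $u_\ast$ requires leveraging both the uniform convergence of the regularized boundary values and the uniform-in-$k$ boundary continuity of $U_k$ provided by the barriers constructed in the proof of Lemma~\ref{existence lemma}; this is the step where the continuity-up-to-the-boundary result for Perron solutions, developed earlier in the section, does its main work.
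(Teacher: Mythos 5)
Your overall strategy is close to the paper's: both proofs regularize the boundary data to smooth data, use that solutions with smooth data are simultaneously weak and very weak (via the Sobolev-sense trace and Green's formula), and then invoke the comparison principle of Lemma~\ref{comparison lemma}. The difference is the order of operations: the paper first shows that the Perron solution $u$ is itself a very weak solution with data $\ph$ (by passing to the limit in the very weak formulation for the approximants) and then applies Lemma~\ref{comparison lemma} once to $u$ and $v$; you instead apply Lemma~\ref{comparison lemma} at each approximate level, obtaining $W_k\le v\le U_k$, and then must identify the monotone limits of $U_k$ and $W_k$ with $u$. Your variant avoids passing to the limit in the boundary integral $\int_{\Sigma_T}g\,\partial_\nu\eta$, but it shifts all the difficulty onto the identification step.

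That identification step is where your argument has a genuine gap. You claim that $u_\ast=\lim_k U_k$ attains the boundary data continuously because of a ``uniform-in-$k$ boundary continuity of $U_k$ provided by the barriers'' of Lemma~\ref{existence lemma}. A decreasing limit of continuous functions need not be continuous, and the barriers in Lemma~\ref{existence lemma} are constructed pointwise for fixed data (the choice of the neighbourhood $E$ and the constant $k$ in the lower barrier depends on the data and on $\eps$); no uniform modulus of continuity at the boundary is established anywhere, and it is not clear one holds. Moreover, even granting that $u_\ast$ is a weak solution attaining the data continuously, concluding $u_\ast=u$ requires a comparison argument with the Perron classes that you do not spell out. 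The tool that closes this gap is the quantitative Ole\u\i nik estimate of Lemma~\ref{boundary value lemma}, which you never invoke: choosing the approximating data so that $\ph_k'\le\ph\le\ph_k\le\ph_k'+1/k$, one has $W_k\le u\le U_k$ (since $U_k\in\mathfrak U_\ph$ and $W_k\in\mathfrak L_\ph$), and Lemma~\ref{boundary value lemma} gives
\[
\int_{\Omega_T}(U_k-W_k)(U_k^m-W_k^m)\,dx\,dt\le \tfrac{C}{k}\longrightarrow 0,
\]
so that $U_k\to u$ and $W_k\to u$ a.e.\ without any discussion of boundary continuity of the limit. This is exactly how the paper controls the limit ($u_j\le u\le v_j$ with $v_j-u_j\to 0$). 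With this substitution your proof goes through; as written, the identification of $u_\ast$ with the Perron solution does not.
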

\begin{proof}
  The claim follows from the comparison principle for very weak
  solutions (Lemma \ref{comparison lemma}) as soon as we show that the
  Perron solution $u$ is also a very weak solution to the boundary
  value problem.  For smooth boundary values, this follows by Green's
  formula from the fact that the Perron solution also attains the
  correct boundary values in the Sobolev sense, see Theorem 5.8 in
  \cite{perron}.

  It remains to reduce the general case to the smooth case. We do this
  by an approximation argument.  Define as before
    \[
  \ph(x,t)=
  \begin{cases}
    g(x,t)^{1/m}, \quad (x,t)\in \Sigma_T,\\
    u_0(x), \quad (x,t)\in \Omega \times \{0\},
  \end{cases}
  \]
  extend $\ph$ continuously to the whole space and choose smooth
  functions $\ph_j$ converging to $\ph$ uniformly and such that
  \begin{displaymath}
    \ph_j\leq \ph \leq \ph_j +1/j.
  \end{displaymath}
  Further, let $u_j$ and $v_j$ be the Perron solutions with boundary
  values $\ph_j$ and $\ph_j +1/j$, respectively. Since $u_j\leq u$
  and 
  \begin{displaymath}
    u-u_j\leq v_j-u_j\to 0
  \end{displaymath}
  as $j\to \infty$ by Lemma \ref{boundary value lemma}, we have that
  $u_j\to u$ pointwise in $\Omega_T$. Now $u_j$ is a very weak
  solution to the boundary value problem with boundary values given by
  $\ph_j$, and passing to the limit $j\to \infty$ in the very weak
  formulation for $u_j$ shows that $u$ is a very weak solution to the
  boundary value problem with boundary values given by $\ph$.
\end{proof}


The previous theorem together with the continuity result in \cite{weak
  solutions} implies the equivalence of local weak and very weak
solutions.

\begin{corollary}
  A nonnegative function $u$ is a local very weak solution to the PME
  if and only if $u$ is a local weak solution to the PME.
\end{corollary}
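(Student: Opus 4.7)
The implication weak $\Rightarrow$ very weak is immediate. If $u$ is a local weak solution, then $u^m\in L^2_{\loc}(0,T;H^1_{\loc}(\Omega))$, so for $\eta\in C_0^\infty(\Omega_T)$ the spatial integration by parts
\[
\int_{\Omega_T}\nabla u^m\cdot\nabla \eta\,dx\,dt=-\int_{\Omega_T}u^m\Delta \eta\,dx\,dt
\]
is legitimate (the compact support of $\eta$ kills boundary terms), and converts the weak formulation into the very weak one.

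For the converse, the first step is to invoke \cite{weak solutions} to redefine $u$ on a set of measure zero so that it is continuous, and hence locally bounded. The plan is then to work locally: fix any smooth subcylinder $U_{t_1,t_2}\Subset\Omega_T$ and let $w$ be the Perron solution to the boundary value problem on $U_{t_1,t_2}$ whose boundary data $\varphi$ is the restriction of $u$ to $\bd_p U_{t_1,t_2}$. These data are continuous and automatically compatible, so Lemma \ref{existence lemma} guarantees that $w$ attains $\varphi$ continuously, while Theorem \ref{thm: solutions} further identifies $w$ with the unique very weak solution of this boundary value problem. Local boundedness of $u$ and $w$ gives $u^m,w^m\in L^2(U_{t_1,t_2})$, so that Lemma \ref{comparison lemma} is available in principle.

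The heart of the argument is to show that $u$ itself, restricted to $U_{t_1,t_2}$, is a very weak solution of the boundary value problem with data $\varphi$. The plan is to test the local very weak equation against functions of the form $\eta\chi_k$, where $\eta$ is a smooth test function on $\overline{U_{t_1,t_2}}$ vanishing on $\bd U\times[t_1,t_2]$ and at $t=t_2$, and $\chi_k$ is a space-time cutoff equal to $1$ outside a shrinking tubular neighbourhood of $\bd_p U_{t_1,t_2}$. Expanding the Laplacian and time derivative that fall on $\chi_k$ and letting $k\to\infty$, the bulk terms converge by dominated convergence, while the cutoff derivatives concentrate on $\bd_p U_{t_1,t_2}$; continuity of $u$ up to the boundary and smoothness of $U$ should then let me identify the limiting concentration as the lateral flux $\int_{t_1}^{t_2}\!\!\int_{\bd U}\varphi^m\partial_\nu \eta\,dS\,dt$ and the initial contribution $\int_U u(\cdot,t_1)\eta(\cdot,t_1)\,dx$. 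I expect this cutoff-limit step to be the main technical obstacle; it is also exactly where continuity of $u$ is essential, in accordance with the remark in the introduction that a very weak solution is not automatically a very weak solution with itself as boundary data.

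Once this identification is achieved, $u$ and $w$ are both very weak solutions on $U_{t_1,t_2}$ with data $\varphi$, and two applications of Lemma \ref{comparison lemma} (with the roles of $u$ and $w$ reversed in the second application) force $u=w$. Since $w$ is a weak solution, it follows that $u^m\in L^2(t_1,t_2;H^1(U))$, and as $U_{t_1,t_2}\Subset\Omega_T$ was arbitrary, $u$ is a local weak solution in $\Omega_T$.
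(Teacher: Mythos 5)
Your proposal follows essentially the same route as the paper: continuity from \cite{weak solutions}, then showing that $u$ is a very weak solution of the boundary value problem with its own boundary data via a cutoff concentrating on the parabolic boundary (this is exactly the computation of Lemma \ref{boundary values}, carried out there with explicit radial harmonic cutoffs $\Psi_{\eps\theta}$ whose Laplacians converge weakly to surface measures), and finally identification with the Perron/very weak solution via Theorem \ref{thm: solutions} and locality of the weak formulation. The only practical adjustment is to take the subcylinders to have balls as bases, as the paper does, since the explicit construction of the concentrating cutoff is radial; this costs nothing because being a weak solution is a local property.
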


\begin{proof}
  By \cite{weak solutions}, local very weak solutions are continuous
  in the interior of $\Omega_T$. Thus, following the proof of Lemma
  \ref{boundary values} below, we may show that local very weak
  solutions are solutions to the boundary value problem \eqref{eq:BVP}
  in space-time cylinders $B_{t_1,t_2}\Subset \Omega_T$ where the base
  is a ball, with boundary values defined by the function itself.
  Therefore the result follows from Theorem \ref{thm: solutions} and
  the fact that being a weak solution is a local property.
\end{proof}

\section{Supersolutions}\label{sec: ss}

In this section, we turn our attention to supersolutions.  The
definitions of weak supersolutions and very weak supersolutions are
analogous to those of weak solutions and very weak solutions.

\begin{definition}\label{def: weak super}
  A function $u\in L^2_{\loc}(0,T;H^1_{\loc}(\Omega))$ is a (local)
  weak supersolution to \eqref{eq:PME} if $u^m\in
  L^2_{\loc}(0,T;H^1_{\loc}(\Omega))$
  \[
  \int_{\Omega_T}\big (-u \ph_t+\grad u^m \cdot \grad \ph \big )\,dx \, dt \ge 0
  \]  
  for all non-negative, compactly supported smooth test functions
  $\ph$.
\end{definition}
As in the case of weak solutions, it is natural to consider also 
very weak supersolutions.
\begin{definition}\label{def: very weak super}
  A function $u\in L^1_{\loc}(\Omega_T)$ is a (local) very weak
  supersolution to \eqref{eq:PME}, if $u^m\in L^1_{\loc}(\Omega_T)$
  and
  \[
  \int_{\Omega_T}\big (-u\ph_t - u^m \Delta\ph\big) \,dx \, dt \ge 0
\]
for all non-negative, compactly supported smooth test functions $\ph$. 
\end{definition}

As the first step in relating the various classes of supersolutions we
will show that continuous very weak supersolutions can be seen as
supersolutions to the boundary value problem in a space-time cylinder,
whose base is a ball, with boundary values defined by the function
itself. The known argument for solutions (see
e.g. \cite{daskalopoulos}) carries over to supersolutions without
serious difficulties. However, the continuity assumption is essential
in the proof.

\begin{lemma}\label{boundary values}
  Let $u$ be a non-negative, continuous very weak supersolution in
  $\Omega_T$. Then for any $B_r\times (t_1,t_2)\Subset \Omega_T$, $u$
  is a very weak supersolution in $B_r\times (t_1,t_2)$ with boundary
  values $u\big |_{\bd_p B_r\times (t_1,t_2)}$.
\end{lemma}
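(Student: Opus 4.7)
The plan is to reduce the boundary-value very weak supersolution inequality on $B_r\times(t_1,t_2)$, which should hold for non-negative $\eta\in C^\infty(\overline{B_r\times[t_1,t_2]})$ vanishing on $\bd B_r\times[t_1,t_2]$ and on $B_r\times\{t_2\}$, to the local very weak supersolution inequality on $\Omega_T$ via a cut-off argument. Fix such an $\eta$ and introduce two cut-offs: a spatial one $\alpha_\delta(x)=\chi(\dist(x,\bd B_r)/\delta)$ supported in $\overline{B_r}$, where $\chi\colon\re\to[0,1]$ is smooth with $\chi=0$ on $(-\infty,0]$ and $\chi=1$ on $[1,\infty)$; and a temporal one $\sigma_\delta(t)=\chi((t-t_1)/\delta)\chi((t_2-t)/\delta)$, supported in $[t_1,t_2]$ and equal to $1$ on $[t_1+\delta,t_2-\delta]$. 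For small $\delta$ the function $\eta_\delta:=\alpha_\delta\sigma_\delta\eta$ lies in $C_0^\infty(\Omega_T)$ and is non-negative, so Definition~\ref{def: very weak super} yields
\[
\int_{\Omega_T}\bigl(-u(\eta_\delta)_t - u^m\Delta\eta_\delta\bigr)\,dx\,dt\ge 0.
\]

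Expanding by the product rule separates the integrand into \emph{bulk} terms, in which all derivatives fall on $\eta$, and \emph{cut-off} terms. The bulk terms converge by dominated convergence to $\int_{B_r\times(t_1,t_2)}(-u\eta_t-u^m\Delta\eta)\,dx\,dt$. The term $-\int u\alpha_\delta\sigma_\delta'\eta$ concentrates at $\{t=t_1\}$ and $\{t=t_2\}$; since $\sigma_\delta'\rightharpoonup\delta_{t_1}-\delta_{t_2}$ as measures, $\eta(\cdot,t_2)=0$, and $u$ is continuous, this term tends to $-\int_{B_r}u(x,t_1)\eta(x,t_1)\,dx$. The remaining cut-off terms involve $\nabla\alpha_\delta$ and $\Delta\alpha_\delta$ and concentrate in the tubular neighbourhood $\{d<\delta\}$, where $d(x):=\dist(x,\bd B_r)$. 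Using $\nabla\alpha_\delta=\chi'(d/\delta)\delta^{-1}\nabla d$ and $\Delta\alpha_\delta=\chi''(d/\delta)\delta^{-2}|\nabla d|^2+\chi'(d/\delta)\delta^{-1}\Delta d$, the Taylor expansion $\eta(x,t)=-d(x)\partial_\nu\eta(\pi(x),t)+O(d(x)^2)$ (valid since $\eta|_{\bd B_r}=0$), and the coarea formula, a cancellation between the $\chi'$-contribution from $2\nabla\alpha_\delta\cdot\nabla\eta$ and the $\chi''$-contribution from $\Delta\alpha_\delta\cdot\eta$ produces, in the limit, $+\int_{\bd B_r\times(t_1,t_2)}u^m\partial_\nu\eta\,dS\,dt$.

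Collecting the limits gives
\[
\int_{B_r\times(t_1,t_2)}\bigl(-u\eta_t-u^m\Delta\eta\bigr)\,dx\,dt - \int_{B_r}u(x,t_1)\eta(x,t_1)\,dx + \int_{\bd B_r\times(t_1,t_2)}u^m\partial_\nu\eta\,dS\,dt\ge 0,
\]
which is the very weak supersolution formulation of the BVP on $B_r\times(t_1,t_2)$ with initial value $u(\cdot,t_1)$ and lateral value $u^m|_{\bd B_r\times[t_1,t_2]}$. The main technical obstacle is the boundary-layer analysis: $\nabla\alpha_\delta$ is of order $\delta^{-1}$ and $\Delta\alpha_\delta$ of order $\delta^{-2}$ on their supports, so these integrals converge only through an exact cancellation. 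Keeping track of the Taylor expansion of $\eta$ in $d$ to the right order and exploiting continuity of $u$ and $u^m$ up to $\bd B_r$ and $\{t=t_1\}$ to pass to the limit in the coarea-type integrals are the delicate steps; continuity of $u$ is used crucially both in evaluating the initial-data term and in recovering the lateral boundary integral of $u^m$, which is why the hypothesis cannot obviously be weakened.
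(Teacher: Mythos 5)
Your proposal is correct and follows the same basic strategy as the paper: test the local very weak inequality with $\eta$ multiplied by cutoffs that concentrate at the lateral boundary and at $t=t_1$, and identify the limits of the concentrating terms with the surface integral of $u^m\partial_\nu\eta$ and the initial integral. The implementations differ in the spatial cutoff, and that choice is the heart of the computation. The paper uses a two-parameter radial profile $\Psi_{\eps\theta}$ that is harmonic in the annulus $B_{r-\theta}\setminus B_{r-\eps}$, so that $\Delta\Psi_{\eps\theta}$ is exactly a difference of surface measures on two spheres; the normal derivative $\partial_\nu\eta$ then appears as the limit of the difference quotient $(\eta|_{|x|=r}-\eta|_{|x|=r-\eps})/\eps$, at the cost of a mollification step (to make the test function smooth), dimension-dependent explicit formulas for $\Psi_{\eps\theta}$, and an iterated limit in $\nu,\theta,\eps,\lambda$ taken in a prescribed order. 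Your one-parameter cutoff $\chi(d/\delta)$ instead extracts $\partial_\nu\eta$ from the expansion $\eta=-d\,\partial_\nu\eta+O(d^2)$ together with the identities $\int_0^1\chi'(\tau)\,d\tau=1$ and $\int_0^1\tau\chi''(\tau)\,d\tau=-1$; the resulting bookkeeping, namely $-2\int u^m\grad\alpha_\delta\cdot\grad\eta\to 2\int_{\bd B_r}u^m\partial_\nu\eta\,dS$ and $-\int u^m\eta\,\Delta\alpha_\delta\to -\int_{\bd B_r}u^m\partial_\nu\eta\,dS$, does yield the correct coefficient $+1$, and the $O(d^2)$ Taylor remainder against $\Delta\alpha_\delta=O(\delta^{-2})$ on a set of measure $O(\delta)$ contributes only $O(\delta)$. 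This is arguably cleaner and works uniformly in the dimension, whereas the paper must treat $n=1,2$ and $n>2$ separately. Two small points to tidy up: take $d$ to be the signed distance $d(x)=r-|x|$, so that $\alpha_\delta$ genuinely vanishes outside $B_r$ and is smooth on the relevant set for $\delta<r$; and note that the admissible $\eta$ must be non-negative, since Definition \ref{def: very weak super} gives the inequality only for non-negative test functions --- the paper's proof carries the same implicit restriction, and non-negative $\eta$ is all that is needed for Lemma \ref{super comparison}.
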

\begin{proof}
  Let $\eta$ be a smooth function in $\overline{B_r\times (t_1,t_2)}$
  vanishing on $\bd B_r\times (t_1,t_2).$ For $\eps\in (0,r)$ and
  $\theta \in [0,\eps)$ let $\Psi_{\eps \theta}$ be the radial,
  continuous function satisfying
  \[
  \Psi_{\eps \theta}(\rho)=
  \begin{cases}
    1 \quad \mbox{for $0 \le \rho \le r-\eps$,}\\
    0 \quad \mbox{for $\rho \ge r-\theta$,}\\
  \end{cases}
  \]
  and
  \[
  \Delta \Psi_{\eps \theta}(x)=\frac{n-1}{|x|}\Psi'_{\eps \theta}(|x|)+\Psi''_{\eps \theta}(|x|)= 0 \quad \mbox{in $B_{r-\theta}\setminus B_{r-\eps}$}.
  \]
By solving the equation we obtain
\begin{align*}
  &\Psi_{\eps \theta}(\rho)= \frac{\rho}{\theta-\eps}+ \frac{r-\theta}{\eps-\theta}, \quad n=1,\\
  &\Psi_{\eps \theta}(\rho)=
  \frac{\ln(\rho)-\ln(r-\theta)}{\ln(r-\eps)-\ln(r-\theta)}, \quad n=2, \\
  &\Psi_{\eps \theta}(\rho)=
  \frac{(r-\eps)^n(\rho^{n-2}(r-\theta)^2-(r-\theta)^n)}{(r-\eps)^n(r-\theta)^2-(r-\eps)^2(r-\theta)^n} \frac{1}{\rho^{n-2}},\quad n>2.
\end{align*}
From now on, we will assume that $n>2$ for simplicity.  A similar
reasoning can be carried out also in the cases $n=1,2$. We observe
that
\[
\grad \Psi_{\eps \theta}(x)=\begin{cases}\frac{(n-2)(r-\eps)^{n-2}(r-\theta)^{n-2}}{(r-\eps)^{n-2}-(r-\theta)^{n-2}} \frac{x}{|x|^n}= -W_{\eps \theta} \frac{x}{|x|^n} \quad \mbox{in $B_{r-\theta}\setminus B_{r-\eps}$},\\
  0 \quad \mbox{otherwise.}
\end{cases}
\]
Now $\Delta \Psi_{\eps \theta}$ can be seen as the distribution
\[
\int_B \ph \Delta \Psi_{\eps \theta} \,dx = W_{\eps \theta} \left ( \int_{\bd B_{r-\theta}} \ph \, dS - \int_{\bd B_{r-\eps}} \ph \, dS \right ). 
\]

Let $K_\nu$ be a standard mollifier, i.e. a smooth, positive, radially
symmetric function supported in $B_\nu(0)$ with the property $\int
K_{\nu} \,dx=1$. Define
\[
\Psi_{\eps \theta}^\nu(x)=\Psi_{\eps \theta}*K_\nu (x).
\]
Let $\phi_\lambda(t)$ be smooth functions with compact support in
$(0,T)$, converging to
\[
H_{t_1}(t)=\begin{cases} 0 \quad \mbox{for $t<t_1$},\\
1 \quad\mbox{for $t\ge t_1$},
\end{cases}
\]
as $\lambda\rightarrow 0$.

Define $\ph(x,t)=\Psi_{\eps \theta}^\nu(x)\phi_\lambda(t)\eta(x,t).$
Now $\ph$ is a smooth, compactly supported function in $\Omega_T$ and
thus
\begin{equation}
  \label{vwsupsol}
  \int_{\Omega_T}\big (-u\ph_t - u^m \Delta\ph \big) \,dx \, dt \ge 0.
\end{equation}
Since 
\begin{align*}
  \Delta\ph &= \phi_\lambda (\Psi_{\eps \theta}^\nu \Delta \eta+2\grad \Psi_{\eps \theta}^\nu\cdot \grad \eta + \eta \Delta \Psi_{\eps \theta}^\nu) \quad \mbox{and}\\
\ph_t&=\Psi_{\eps \theta}^\nu(\phi_\lambda \eta_t + (\phi_\lambda)_t \eta),
\end{align*}
inequality \eqref{vwsupsol} can be written as
\begin{align}\nonumber
  0 \le& \int_{t_1}^{t_2}\int_B \big(-u^m\phi_\lambda \Psi_{\eps \theta}^\nu \Delta \eta - u \Psi_{\eps \theta}^\nu \phi_\lambda \eta_t \big)\,dx \, dt\\
&- \int_{t_1}^{t_2}\int_B 2 u^m \phi_\lambda \grad \Psi_{\eps \theta}^\nu\cdot \grad \eta \,dx \, dt\nonumber\\
&-\int_{t_1}^{t_2}\int_B u^m \phi_\lambda \eta \Delta \Psi_{\eps \theta}^\nu \,dx \, dt\nonumber\\
&-\int_{t_1}^{t_2}\int_B u \Psi_{\eps \theta}^\nu (\phi_\lambda)_t \eta \,dx \, dt\nonumber\\
=& I_1+I_2+I_3+I_4.\nonumber\\
\label{supersolution boundary value}
\end{align}
Letting $\nu\rightarrow 0$, $\theta\rightarrow 0$, $\eps \rightarrow
0$ and $\lambda \rightarrow 0$ gives us
\[
I_1\rightarrow \int_{t_1}^{t_2}\int_B \big(-u^m \Delta \eta - u\eta_t \big) \,dx \, dt.
\]
Letting $\nu \rightarrow 0$ and $\lambda\rightarrow 0$ and taking $\supp(\grad \Psi_{\eps \theta})$ into account, 
\begin{align*}
I_2&=-2\int_{t_1}^{t_2}\int_{B_{r-\theta} \setminus B_{r-\eps}} u^m \grad \Psi_{\eps \theta}\cdot \grad \eta  \,dx \, dt \\
&=2 W_{\eps \theta} \int_{t_1}^{t_2}\int_{B_{r-\theta} \setminus B_{r-\eps}} u^m \frac{x}{|x|^n}\cdot \grad \eta \,dx \, dt \\
&=2 W_{\eps \theta} \int_{t_1}^{t_2}\int_{r-\eps}^{r-\theta} \int_{S^{n-1}} u^m \partial_\nu \eta\big |_{|x|=\rho} \,dS \, d\rho \, dt\\
&\rightarrow 2 W_{\eps 0} \int_{t_1}^{t_2}\int_{r-\eps}^{r} \int_{S^{n-1}} u^m \partial_\nu \eta\big |_{|x|=\rho} \,dS \, d\rho \, dt\\
\end{align*}
as $\theta \rightarrow 0$. Since
\[
W_{\eps 0}=\frac{(n-2)(r-\eps)^{n-2}r^{n-2}}{r^{n-2}-(r-\eps)^{n-2}}=\frac{(r-\eps)^{n-2}r^{n-2}}{\eps \xi^{n-3}},
\]
where $\xi\in(r-\eps, r)$, we get
\begin{align*}
&2 W_{\eps 0} \int_{t_1}^{t_2}\int_{r-\eps}^{r} \int_{S^{n-1}} u^m \partial_\nu \eta\big |_{|x|=\rho} \,dS \, d\rho \, dt \\
&= 2 \frac{(r-\eps)^{n-2}r^{n-2}}{\xi^{n-3}} \int_{t_1}^{t_2}\frac{1}{\eps}\int_{r-\eps}^{r} \int_{S^{n-1}} u^m \partial_\nu \eta\big |_{|x|=\rho} \,dS \, d\rho \, dt\\
&\rightarrow 2 r^{n-1}\int_{t_1}^{t_2} \int_{S^{n-1}} u^m \partial_\nu \eta\big |_{|x|=r} \,dS \, dt.
\end{align*}
Since $u$ is continuous,  
\[
I_3\rightarrow - W_{\eps \theta}\int_{t_1}^{t_2}\left (\int_{\bd B_{r-\theta}} u^m  \eta \,dS-\int_{\bd B_{r-\eps}} u^m  \eta \,dS \right ) \, dt, 
\]
as $\nu\rightarrow 0$ and $\lambda\rightarrow 0$, due to the weak
convergence of the measures $\Delta \Psi_{\eps \theta}^\nu$. Note that
the continuity assumption is essential here, as we use weak
convergence for signed measures.  Now, as $\theta \rightarrow 0$ we
get
\begin{align*}
&-W_{\eps 0} \int_{t_1}^{t_2} \int_{S^{n-1}} u^m \left ( \eta \big |_{|x|=r} -\eta \big |_{|x|=r-\eps} \right ) \,dS \, dt \\
&=-\frac{(r-\eps)^{n-2}r^{n-2}}{\xi^{n-3}}\int_{t_1}^{t_2} \int_{S^{n-1}} u^m \left (\frac{ \eta \big |_{|x|=r} -\eta \big |_{|x|=r-\eps}}{\eps}\right) \,dS \, dt\\
&\rightarrow -r^{n-1} \int_{t_1}^{t_2} \int_{S^{n-1}} u^m \partial_\nu \eta \big |_{|x|=r} \,dS \, dt.
\end{align*}
Finally, letting $\nu\rightarrow 0, \theta \rightarrow 0 $ and $\eps
\rightarrow 0$ gives us
\begin{align*}
I_4&\rightarrow -\int_{t_1}^{t_2} \int_B u (\phi_\lambda)_t \eta \,dx \,dt \rightarrow -\int_B u(x,0) \eta(x,0)\,dx
\end{align*}
as $\lambda \rightarrow 0$. Now we may conclude from inequality
\eqref{supersolution boundary value} that
\begin{align*}
&  \int_{t_1}^{t_2}\int_B \big(-u^m \Delta \eta - u \eta_t \big)\,dx \, dt + \int_{t_1}^{t_2}\int_{\bd B} u^m \partial_\nu \eta \,dS \, dt \\
&- \int_B u(x,0) \eta(x,0) \,dx \ge 0. \qedhere
\end{align*}
\end{proof}

The next step is to show that continuous very weak supersolutions
satisfy the comparison principle with continuous very weak solutions
in the special case where we look at a cylinder whose base is a
ball. Since weak solutions are also very weak solutions, this lemma is
the key to showing that continuous very weak supersolutions are indeed
$m$-superporous functions in the sense of Definition \ref{def:
  $m$-superporous} below.
\begin{lemma}\label{super comparison}
  Let $u$ be a continuous very weak supersolution and let $v$ be a continuous very weak solution in $\Omega_T$. Let $U_{t_1,t_2}=B_r\times [t_1,t_2]\Subset \Omega_T$. Then if $u\ge v$ on $\bd_p U_{t_1,t_2}$, then $u\ge v$ in $U_{t_1,t_2}$.  
\end{lemma}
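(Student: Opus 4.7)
The plan is to reduce the statement to the technical comparison Lemma \ref{comparison tool}, much as Lemma \ref{comparison lemma} did for boundary value problems. The first step is to recast both $u$ and $v$ as (super)solutions of a boundary value problem on $U_{t_1,t_2}$ with their own traces on $\bd_p U_{t_1,t_2}$. Lemma \ref{boundary values} supplies this directly for the continuous very weak supersolution $u$: for $\eta$ a non-negative smooth function on $\overline{U_{t_1,t_2}}$ vanishing on $\bd B_r\times[t_1,t_2]$ and at $t=t_2$,
\[
\int_{U_{t_1,t_2}}(-u\eta_t-u^m\Delta\eta)\,dx\,dt+\int_{t_1}^{t_2}\int_{\bd B_r}u^m\partial_\nu\eta\,dS\,dt-\int_{B_r}u(x,t_1)\eta(x,t_1)\,dx\ge 0.
\]
An inspection of the proof of Lemma \ref{boundary values} shows that precisely the same cutoff-and-mollification procedure, performed with equality throughout, produces the analogous equality for the continuous very weak solution $v$.

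Next, I would subtract the $v$-equality from the $u$-inequality. After rearranging, this reads
\[
\int_{U_{t_1,t_2}}\big((v^m-u^m)\Delta\eta+(v-u)\eta_t\big)\,dx\,dt\ge -\int_{t_1}^{t_2}\int_{\bd B_r}(u^m-v^m)\partial_\nu\eta\,dS\,dt+\int_{B_r}(u-v)(x,t_1)\eta(x,t_1)\,dx.
\]
The hypothesis $u\ge v$ on $\bd_p U_{t_1,t_2}$ now does the job. On the lateral face $u^m\ge v^m$, and since $\eta\ge 0$ vanishes there, $\partial_\nu\eta\le 0$, so the boundary integral on the right-hand side is non-negative. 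On the bottom face $u(x,t_1)\ge v(x,t_1)$ and $\eta(x,t_1)\ge 0$, so the initial integral is non-negative as well. Discarding both yields
\[
\int_{U_{t_1,t_2}}\big((v^m-u^m)\Delta\eta+(v-u)\eta_t\big)\,dx\,dt\ge 0
\]
for every admissible non-negative $\eta$. Applying Lemma \ref{comparison tool} on the cylinder $U_{t_1,t_2}$, whose legitimacy in this setting is exactly the content of Remark \ref{finite union of cylinders} in the one-cylinder case, then gives $u\ge v$ in $U_{t_1,t_2}$.

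The main obstacle is justifying the very first step for the solution $v$: one has to check that the delicate passages to the limit in the proof of Lemma \ref{boundary values}—in particular the appeal to weak convergence of the signed measures $\Delta\Psi_{\eps\theta}^\nu$, which relied crucially on the continuity of $u^m$ on the spheres $\bd B_{r-\eps}$—run through unchanged when $u$ is replaced by $v$, yielding an equality in place of the inequality. Once this boundary-value formulation is secured for $v$, the remaining manipulations are bookkeeping and the subtracted inequality fits the hypothesis of Lemma \ref{comparison tool} exactly.
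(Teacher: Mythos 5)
Your proposal is correct and takes essentially the same route as the paper: apply Lemma \ref{boundary values} to $u$, obtain the analogous boundary-value identity for $v$, subtract, use the signs of the lateral and initial terms exactly as in the proof of Lemma \ref{comparison lemma}, and invoke Lemma \ref{comparison tool}. You are in fact slightly more careful than the paper's own write-up, which obtains the identity for $v$ ``by definition of very weak solutions'' without remarking that $v$ is only a \emph{local} very weak solution, so that its boundary-value formulation on $U_{t_1,t_2}$ indeed requires rerunning the Lemma \ref{boundary values} argument with equalities, as you point out.
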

\begin{proof}
Since $u$ is a continuous very weak supersolution, Lemma \ref{boundary values} gives
\begin{align*}
 & \int_{t_1}^{t_2}\int_B \big (-u^m \Delta \ph - u \ph_t\big ) \,dx \, dt + \int_{t_1}^{t_2}\int_{\bd B} u^m \partial_\nu \ph \,dS \, dt \\
&- \int_B u(x,0) \ph(x,0) \,dx \ge 0
\end{align*}
for every smooth $\ph$ vanishing on $\bd B_r \times (t_1,t_2)$. By definition of very weak solutions
\begin{align*}
&\int_{t_1}^{t_2}\int_B \big (-v^m \Delta \ph - v \ph_t\big ) \,dx \, dt + \int_{t_1}^{t_2}\int_{\bd B} v^m \partial_\nu \ph \,dS \, dt\\  &- \int_B v(x,0) \ph(x,0) \,dx = 0.
\end{align*}
Subtracting the inequalities gives
\begin{align*}
 & \int_{t_1}^{t_2}\int_B \big ((v^m-u^m) \Delta \ph +(v- u) \ph_t\big) \,dx \, dt - \int_{t_1}^{t_2}\int_{\bd B} (v^m-u^m) \partial_\nu \ph \,dS \, dt \\
&+ \int_B (v(x,0)-u(x,0)) \ph(x,0) \,dx \ge 0.
\end{align*}
In fact, we could have assumed $v$ is only a very weak subsolution to
get the same inequality. Now we are at similar situation as in
\eqref{subtracting} with inequality instead of equality. However, the
same reasoning still applies, and thus we may use
Lemma \ref{comparison tool} to conclude that $u\ge v $ in
$U_{t_1,t_2}$. Note that $u$ and $v$ are continuous functions and thus
$u,v,u^m,v^m \in L^2(U_{t_1,t_2})$ so the assumptions of Lemma
\ref{comparison lemma} hold for $u$ and $v$.
\end{proof}

The following lemma extends the comparison property to finite unions
of space-time cylinders whose bases are balls.  We utilize a Schwarz
type alternating method. The proof is delicate since we need to work
around the fact that constants cannot be added to solutions.


\begin{lemma} \label{comparison extends} Let $B_i\subset \rn$,
  $i=1,\ldots,N$ be a collection of balls and let $U_i = B_i \times
  (t_1,t_2)$. Set $K=\cup_{i=1}^N U_i$. Suppose that $u$ satisfies the
  comparison principle for cylinders whose base is a ball in a
  neighbourhood of $\overline{K}$. That is, if $h$ is a continuous
  weak solution such that $h\le u$ on $\bd_p U$, where $U$ is a
  cylinder whose base is a ball, then $h\le u$ in $U$.

  Then the comparison principle for $u$ holds also in $K$. That is, if
  $h$ is a solution of the PME in $K$, which is continuous up to the
  boundary of $K$, then $h\le u$ on $\bd_p K$ implies $h\le u$ in $K$.
\end{lemma}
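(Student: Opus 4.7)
My plan is to use a Schwarz-type alternating iteration to construct, starting from $u$, a continuous very weak solution $\tilde u$ of the PME in $K$ which satisfies $h\le\tilde u\le u$ throughout $K$ and equals $u$ on $\bd_p K$. Once $\tilde u$ is in hand, applying the comparison principle for very weak solutions in the union $K$ (Lemma \ref{comparison lemma} together with Remark \ref{finite union of cylinders}; all $U_i$ share the time interval $(t_1,t_2)$, so $K=\bigl(\bigcup_i B_i\bigr)\times(t_1,t_2)$) to the two very weak solutions $h$ and $\tilde u$ will give $h\le\tilde u\le u$ in $K$, which is the conclusion.

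I would build the iterates as follows. Fix a cyclic sequence $j_0,j_1,\ldots$ in $\{1,\ldots,N\}$ hitting each index infinitely often. Set $u_0=u$ on $\overline K$. Given $u_k$, define $u_{k+1}=u_k$ on $K\setminus U_{j_k}$, and on $U_{j_k}$ let $u_{k+1}$ be the unique continuous weak solution of the boundary value problem in the ball cylinder $U_{j_k}$ with boundary data $u_k|_{\bd_p U_{j_k}}$; existence is provided by Lemma \ref{existence lemma} and uniqueness by Theorem \ref{thm: solutions}. I would verify inductively three properties: (a) $u_k$ is continuous on $\overline K$; (b) $h\le u_k\le u$ in $K$; and (c) $u_{k+1}\le u_k$ in $K$. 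Continuity in (a) follows from continuity up to the boundary of Perron solutions (Lemma \ref{existence lemma}) together with the matching data on $\bd_p U_{j_k}$. Property (b) uses the ball-cylinder comparison hypothesis on $u$ for the upper bound, and the comparison of weak solutions (Lemma \ref{comparison lemma}) between $u_{k+1}$ and $h$ on $U_{j_k}$ for the lower bound.

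The central difficulty is property (c): to make the construction monotone I need $u_k$ itself to enjoy the ball-cylinder comparison principle on $U_{j_k}$. At $k=0$ this is the hypothesis, but for $k\ge 1$ the function $u_k$ is a patched object --- a genuine weak solution inside $U_{j_{k-1}}$ glued to the previous iterate elsewhere. The natural strategy is to show that any such patched continuous function is a very weak supersolution in a neighborhood of $U_{j_k}$ and then invoke Lemma \ref{super comparison}. Establishing this pasting-supersolution property is where the nonlinearity bites, exactly as the authors warn: one cannot add small constants to create the strict inequalities that would make the argument routine. If the direct pasting step runs into trouble, I would instead perturb the boundary data on $\bd_p K$ by $\eps>0$, carry out the Schwarz iteration with the extra margin making inequalities strict along the way, and pass $\eps\to 0$ at the end using the quantitative $L^1$-type estimate of Lemma \ref{boundary value lemma}.

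Granting (a)--(c), the decreasing sequence $u_k$ converges pointwise to a function $\tilde u$ with $h\le\tilde u\le u$; since the iteration never modifies values on $\bd_p K$, $\tilde u=u$ there. Monotone bounded convergence in the very weak formulation shows $\tilde u$ is a very weak solution on each $U_i$ (each index is visited infinitely often, so the solution property is preserved in the limit), and since very weak solutionhood is local and the $U_i$ cover $K$, $\tilde u$ is a very weak solution on all of $K$. The comparison step described in the first paragraph then closes the argument.
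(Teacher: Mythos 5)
Your plan has a genuine gap at exactly the point you identify as ``the central difficulty,'' and the gap is not a technicality that the suggested fallback repairs. Your iteration goes \emph{downward} from $u_0=u$, and monotonicity $u_{k+1}\le u_k$ on $U_{j_k}$ for $k\ge 1$ requires the patched iterate $u_k$ to satisfy the ball-cylinder comparison principle (equivalently, to be a supersolution near $U_{j_k}$). But $u_k$ equals $u$ outside the previously visited cylinders, and in the lemma as stated $u$ is \emph{only} assumed to satisfy a comparison principle --- it is not assumed to be a very weak supersolution --- so the pasting-supersolution property you want to prove cannot even be formulated for that part of $u_k$; and even in the intended application (where $u$ is a continuous very weak supersolution) a pasting lemma of Poisson-modification type for the PME is nontrivial and is nowhere established in the paper. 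Perturbing the boundary data on $\bd_p K$ by $\eps$ does not help: the obstruction is structural (the patched function is not known to be a supersolution), not a matter of strict versus non-strict inequalities. A secondary issue is that your closing step needs $h$ and $\tilde u$ to be very weak solutions \emph{to the boundary value problem} on the non-cylindrical-base domain $K$ with their own boundary data, which requires an argument in the spirit of Lemma \ref{boundary values} that the paper only carries out for ball cylinders.

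The paper avoids all of this by running the Schwarz iteration \emph{upward from below}: one fixes $\delta>0$, a smooth $\ph\le u$ with $\ph\ge h-\delta$ on $\bd_p K$, and a continuous weak subsolution $\Psi_0$ with boundary values $\ph$, and then replaces the current iterate on $U_i$ by the solution with matching boundary data. Monotonicity of that sequence follows from the elementary comparison of a subsolution against a solution with the same boundary values inside a single ball cylinder (no pasting of supersolutions needed), while the hypothesis on $u$ is used only as an upper barrier to keep every iterate below $u$. The increasing bounded limit $w$ is shown to be a continuous weak solution, and the conclusion $h\le w\le u$ is extracted by comparing $h$ with the solution $w_\delta$ having boundary values $\ph+\delta$ and letting $\delta\to 0$ via the quantitative estimate of Lemma \ref{boundary value lemma}. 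If you want to salvage your write-up, you should reverse the direction of your iteration along these lines; as it stands, the key idea that makes the alternating method work for the PME is missing.
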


\begin{proof}
  Let $\delta >0$. Take $\ph \in C^\infty (K)\cap C(\overline{K})$ such that
  \begin{align*}
    \ph &\le u \quad \text{in } K\cup \bd_p K,\\
h -\delta &\le \ph \quad \text{on } \bd_pK.
  \end{align*}
Let $\Psi_0$ be a continuous weak subsolution to the PME in $K$ satisfying 
\begin{align*}
  \Psi_0 &= \ph \quad \text{on } \bd_p  K,\\
\Psi_0&\le \ph \quad \text{in } K.
 \end{align*}
 Such a subsolution can be constructed by the arguments leading to
 Theorem 2.6 of \cite{obstacle}. We want to construct an increasing
 sequence of continuous weak subsolutions $v_k$ such that
 $v_k\rightarrow w$, where $w$ is a continuous weak solution. Set
 $v_0=\Psi_0$. For $1\le i \le N$ and $j\ge 0$ we define the functions
 recursively by
\[
v_{Nj+i}=
\begin{cases}
  \tilde{v}_{Nj+i-1} \quad \text{in }U_i,\\
v_{Nj+i-1} \quad \text{in } K\setminus U_i,
\end{cases}
\] 

where $\tilde{v}_{Nj+i-1}$ is the continuous weak solution in $U_i$
with boundary values $v_{Nj+i-1}$ on $\bd_p U_i$. Existence and
continuity of $\tilde{v}_{Nj+i-1}$ are provided by \cite{perron}. Thus
$v_k$ is a continuous weak subsolution for each $k$.

We want to show that the sequence $v_k$ converges to a continuous weak
solution. Since $v_{Nj+i-1}$ is a continuous weak subsolution in $U_i$
and $v_{Nj+i}$ is a continuous weak solution in $U_i$, we may use the
comparison principle for subsolutions in $U_i$. By construction,
$v_{Nj+i-1}$ and $v_{Nj+i}$ coincide on $\bd_p U_i$ and thus
$v_{Nj+i-1} \le v_{Nj+i}$ in $U_i$. Hence $v_k$ is an increasing
sequence.

The function $v_0$ has been chosen in such a way, that $v_0 \le \ph
\le u$ in $K$. Suppose $v_{Nj+i-1} \le u$ in $K$. Now $v_{Nj+i} \le u$
on $\bd_p U_i$ by construction and therefore since $u$ satisfies the
comparison principle in $U$ by assumption, $v_{Nj+i} \le u$ in
$U_i$. It follows by induction, that $v_k\le u$ in $K$ for all
$k$. Now $v_k$ is bounded and increasing and thus $v_k\rightarrow w
\le u$ for some $w$ in $\overline{K}$.

Weak solutions are locally H\"older continuous (see \cite{non-negative
  solutions}), so for each $z\in K$, there are $i_z\in \n$ and $r_z>0$
such that $B(z,r_z) \subset U_{i_z}$ and $v_{Nj+i_z}$ is H\"older
continuous in $B(z,r_z)$ for every $j$. Therefore the subsequence
$v_{Nj+i_z}$ converges to a continuous function in $B(z,r_z)$. Since
$v_k \rightarrow w$, we conclude that $w$ is continuous in $K$. To
show the continuity of $w$ up to the boundary, let $h_\ph$ be the
continuous weak solution in $K$, with boundary values $\ph$ on $\bd_p
K$. By construction $v_0 \le w$ and by the comparison principle $w\le
h_\ph$ in $K\cup \bd_p K$. Since $v_0=h_\ph$ on $\bd_p K$ and $v_0,
h_\ph$ are continuous, we conclude that $w$ is continuous in $K\cup
\bd_p K$.

Finally, we need to show that $w$ is indeed a continuous weak solution in $K$. It suffices to show that $w$ is a continuous weak solution in $U_i^\rho=B_{i_0}\times (t_1,t_2-\rho)$ for every $1\le i_0 \le N$ and $\rho >0$. The sequence $v_k$ is increasing, each $v_k$ is continuous in $\overline{K}$ and $w$ is continuous in $K\cup\bd_p K$. Therefore $v_k\rightarrow w$ uniformly in $K\cap \{t\le t_2-\rho\}$. Thus for every $\eps>0$ there is $j_\eps$ such that for $j\ge j_\eps$, we have
\[
|w-v_{Nj+i_0}|<\eps \quad \text{in } \overline{U_{i_0}^\rho}.
\]
Let $w'$ be a continuous weak solution in $U_{i_0}^\rho$ with boundary values $w$ on $\bd_pU_{i_0}^\rho$. Now 
\[
 v_{Nj+i_0}\le w'+\eps, \quad \text{and} \quad
w'\le v_{Nj+i_0}+\eps \quad \text{on } \bd_p U_{i_0}^\rho.
\]

Let $w_\eps$ be the continuous weak solution in $U_{i_0}^\rho$ with boundary values $w'+\eps$ on $\bd_pU_{i_0}^\rho$ and let $v_\eps$ be the continuous weak solution in $U_{i_0}^\rho$ with boundary values $v_{Nj+i_0}+\eps$ on $\bd_p U_{i_0}^\rho$. Then by the comparison principle for weak solutions $v_{Nj+i_0}\le w_\eps$ and $w'\le v_\eps$ in $U_{i_0}^\rho$. Since
\begin{align*}
|w'-v_{Nj+i_0}|&=(w'-v_{Nj+i_0})_+ + (v_{Nj+i_0}-w')_+ \\
&\le (w_\eps -v_{Nj+i_0})+(v_\eps -w'),
\end{align*}
we may use Lemma \ref{boundary value lemma} to conclude $|v_{Nj+i_0}- w'|\rightarrow 0$ uniformly in $U_{i_0}^\rho$ (passing to a subsequence, if necessary). Therefore we may assume
\[
v_{Nj+i_0}-\eps \le w' \le v_{Nj+i_0} + \eps \quad \text{in }\overline{U_{i_0}^\rho}. 
\]
Now
\[
|w-w'|\le |w-v_{Nj+i_0}| + |v_{Nj+i_0}-w'| < 2\eps \quad \text{in } \overline{U_{i_0}^\rho}. 
\]
Letting $\eps\rightarrow 0$ shows that $w'=w$ and thus $w$ is a continuous weak solution in $U_{i_0}^\rho. $ Denote by $w_\delta$ the continuous weak solution in $K$ with boundary values $\ph+\delta$ on $\bd_p K$. Then $w_\delta \ge h $ on $\bd_p K$ and thus by comparison principle for the continuous weak solutions, the inequality holds in $K$. Therefore
\[
0\le (h-w)_+(h^m-w^m)_+ \le (w_\delta-w)(w_\delta^m-w^m).
\] 
Lemma \ref{boundary value lemma} gives us 
\begin{align*}
0 \le \int_K (h-w)_+(h^m-w^m)_+ \,dx \,dt &\le \int_K (w_\delta-w)(w_\delta^m-w^m) \,dx \, dt \\
&\le \delta |K| (\sup \ph +1)+\delta |K| (\sup \ph +1)^m. 
\end{align*}
Since this holds for any $\delta>0$, letting $\delta \rightarrow 0$ shows that $h\le w$ in $K$. On the other hand, $w\le u$ by construction and thus $h\le u$ as we wanted.
\end{proof}

\section{$m$-superporous functions}\label{sec: sp}
Another important class of supersolutions is the class of
$m$-superporous functions, defined in terms of a comparison principle
with respect to continuous weak solutions. This class is analoguous to
superharmonic functions in classical potential theory, where the
definition is due to Riesz.

\begin{definition}\label{def: $m$-superporous}
A function $u: \Omega_T \rightarrow [0,\infty]$ is \emph{$m$-superporous}, if
\begin{itemize}
\item [(1)] $u$ is lower semicontinuous,
\item [(2)] $u$ is finite in a dense subset of $\Omega_T$, and
\item [(3)] the following parabolic comparison principle holds: Let
  $U_{t_1,t_2} \Subset \Omega_T$, and let $h$ be a weak solution to
  the PME which is continuous in $\overline{U_{t_1,t_2}}$. Then, if
  $h\le u$ on $\bd_p U_{t_1,t_2}$, $h\le u$ also in $U_{t_1,t_2}$.
\end{itemize}
\end{definition}

Our aim in this section is to is to connect $m$-superporous functions
to the notions of weak and very weak supersolutions, i.e. to prove
Theorem \ref{thm: super}.  The first step is the next lemma, which
shows that continuous very weak supersolutions are
$m$-superporous. This is essentially a consequence of
Lemma~\ref{comparison extends}, but some care is again required due to
the fact that constants may not be added to solutions.

\begin{lemma}\label{very weak super to $m$-superporous}
  Let $u$ be a continuous very weak supersolution to \eqref{eq:PME} in
  $\Omega_T$. Then $u$ is $m$-superporous.
\end{lemma}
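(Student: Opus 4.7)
My approach is to verify the three defining properties of an $m$-superporous function. Properties (1) and (2) are immediate because $u$ is continuous and real-valued, so the entire argument reduces to establishing the parabolic comparison principle (3).

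To prove (3), I would exhaust $U_{t_1,t_2}$ from inside by a sequence $K_j\Subset U_{t_1,t_2}$ of finite unions of ball-based cylinders, arranged so that $\bd_p K_j$ lies arbitrarily close to $\bd_p U_{t_1,t_2}$. Since $h-u$ is continuous on the compact set $\overline{U_{t_1,t_2}}$ and nonpositive on $\bd_p U_{t_1,t_2}$, uniform continuity gives $\delta_j:=\max_{\bd_p K_j}(h-u)_+\to 0$. By Lemma~\ref{super comparison}, $u$ satisfies the comparison principle with continuous weak solutions in every ball-based cylinder contained in $\Omega_T$, so Lemma~\ref{comparison extends} promotes this to the same comparison principle inside each $K_j$.

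The core of the proof is then to run, inside each $K_j$, the perturbation argument from the last part of the proof of Lemma~\ref{comparison extends}. Using Lemma~\ref{existence lemma}, I would introduce two continuous weak solutions in $K_j$: $u_j$ with boundary values $u+\delta_j$ on $\bd_p K_j$, and $u_j^0$ with boundary values $u$. Since $h\le u+\delta_j=u_j$ on $\bd_p K_j$, the standard comparison principle for weak solutions forces $h\le u_j$ in $K_j$; since $u_j^0=u$ on $\bd_p K_j$ and $u$ now obeys the comparison principle in $K_j$, we get $u_j^0\le u$ in $K_j$. Combining these pinches at every point where $h>u$ yields $0\le h-u\le u_j-u_j^0$ and $0\le h^m-u^m\le u_j^m-(u_j^0)^m$, hence the pointwise inequality
\[
(h-u)_+(h^m-u^m)_+ \le (u_j-u_j^0)\bigl(u_j^m-(u_j^0)^m\bigr) \quad\text{in } K_j.
\]
Integrating over $K_j$ and applying Lemma~\ref{boundary value lemma} to $u_j$ and $u_j^0$ bounds the right-hand side by $C\delta_j$, with $C$ depending only on $|U_{t_1,t_2}|$, $m$, and $\sup u$. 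Letting $j\to\infty$ and using monotone convergence on the left yields $\int_{U_{t_1,t_2}}(h-u)_+(h^m-u^m)_+\,dx\,dt=0$, and continuity of $h$ and $u$ then forces $h\le u$ in $U_{t_1,t_2}$.

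The main obstacle is precisely the impossibility of adding constants to solutions of the PME: we only have $h\le u+\delta_j$ on $\bd_p K_j$, not $h\le u$. The two auxiliary solutions $u_j$ and $u_j^0$ are engineered to absorb this slack into a quantity controlled by Lemma~\ref{boundary value lemma}. A minor technical matter is that Lemma~\ref{boundary value lemma} was formulated on $\Omega_T$ but is invoked here on a finite union of ball-based cylinders; this is legitimate in view of Remark~\ref{finite union of cylinders}, and the identical usage already appears at the end of the proof of Lemma~\ref{comparison extends}.
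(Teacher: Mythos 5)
Your proof is correct and follows essentially the same route as the paper: reduce to a finite union of ball-based cylinders on whose parabolic boundary $h\le u+(\text{small})$, invoke Lemmas \ref{super comparison} and \ref{comparison extends} to obtain the comparison principle for $u$ there, and control $(h-u)_+(h^m-u^m)_+$ via Lemma \ref{boundary value lemma}. The only differences are cosmetic: the paper produces the union $K$ by covering the compact set $\{h\ge u+\eps\}$ rather than by exhaustion, and your explicit second auxiliary solution $u_j^0$ with boundary values $u$ is a slightly more careful rendering of the paper's direct comparison of $u_\eps$ with $u$.
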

\begin{proof}
  Let $U_{t_1,t_2} \Subset \Omega_T$ and let $h$ be a continuous weak
  solution such that $h\le u$ on $\bd_p U_{t_1,t_2}$. We want to show,
  that $h\le u$ in $U_{t_1,t_2}$. Take $\eps >0$ and define the set
  \[
  D=\{(x,t)\in \overline{U_{t_1,t_2}} : h \ge u+\eps\}.
  \]
  Now $D$ is compact and by the assumption $D\subset U\times
  [t_1,t_2]$. Thus $D$ has a finite covering
  \[
  K=\cup_{i=1}^N B_i \times [t_1,t_2], 
  \]
  where $B_i$ are balls, such that $\overline{B_i} \subset U$. Since
  $D\subset K$, we have $\bd_p K \subset U_{t_1,t_2} \setminus D$ and
  therefore $h < u+\eps$ on $\bd_p K$.  Let $u_\eps$ be the continuous
  weak solution with boundary values $u+\eps$ on $\bd_p K$. Then by
  the comparison principle $h\le u_\eps $in $K$. By Lemma \ref{super
    comparison} and Lemma \ref{comparison extends}, $u$ satisfies the
  comparison principle in $K$ and thus $u \le u_\eps$ in $K$. Now
  \[
  0 \le (h-u)_+ (h^m-u^m)_+ \le (u_\eps - u)(u_\eps^m - u^m) 
  \]
  and so by Lemma \ref{boundary value lemma}
\begin{align*}
  0\le \int_K (h-u)_+ (h^m-u^m)_+ \, dx \, dt &\le \int_K(u_\eps - u)(u_\eps^m - u^m) \, dx \, dt \\
&\le \eps |K| (\sup u + 1 ) + \eps |K| (\sup u + 1)^m.
\end{align*}
By construction of the set $D$, we have $h\le u+\eps$ in $U_{t_1,t_2} \setminus D$. Thus letting $\eps\rightarrow 0$ shows that $h\le u$ in $U_{t_1,t_2}$. We conclude that $u$ is $m$-superporous in $\Omega_T$.  
\end{proof}

The other nontrivial fact needed for Theorem \ref{thm: super} is that
locally bounded $m$-superporous functions are weak supersolutions.
For this purpose, we next present a Caccioppoli type estimate for the
weak supersolutions. 

\begin{lemma}\label{Caccioppoli}
  Let $u^m \in L^2(0,T;H^1(\Omega))$ be a weak supersolution, such that $u\le M$ in $\Omega_T$ for some $M>0$. Then
  \[
    \int_{\Omega_T} \zeta^2 | \nabla u^m|^2 \,dx \, dt \le 16M^{2m}T \int_\Omega |\nabla \zeta|^2 \,dx + 4M^{m+1} \int_\Omega \zeta^2 \, dx,
  \]
for every non-negative $\zeta \in C_0^\infty (\Omega)$. Note that $\zeta$ depends only on $x$. 
\end{lemma}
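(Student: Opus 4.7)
The plan is to test the weak supersolution inequality against a function of the form
\[
\varphi(x,t) = \eta(t)\,\zeta(x)^2\bigl(M^m - u(x,t)^m\bigr),
\]
where $\eta \in C_0^\infty(0,T)$ is a smooth nonnegative time cutoff. Since $u \le M$ we have $\varphi \ge 0$, and $\varphi$ has compact support in $\Omega_T$. Because $u^m$ lies only in $L^2(0,T; H^1(\Omega))$ rather than being smooth, the substitution must first be justified by a standard time-mollification: I would replace $u^m$ inside $\varphi$ by its Steklov average $[u^m]_h$, apply the mollified supersolution inequality, carry out the computations described below, and then pass to the limit $h\to 0$.

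Formally computing,
\[
\nabla\varphi = \eta\bigl(-\zeta^2\nabla u^m + 2\zeta(M^m - u^m)\nabla\zeta\bigr), \qquad \varphi_t = \eta'\zeta^2(M^m - u^m) - \eta\zeta^2(u^m)_t.
\]
The contribution $-u\varphi_t$ produces two pieces: $-u\eta'\zeta^2(M^m - u^m)$ and $u\eta\zeta^2(u^m)_t$. Using the identity $u(u^m)_t = \tfrac{m}{m+1}(u^{m+1})_t$ and integrating by parts once in $t$ (legitimate on the Steklov level), these combine into a single term
\[
-\int_{\Omega_T}\eta'(t)\,\zeta(x)^2\,G(u)\,dx\,dt, \qquad G(s) := sM^m - \tfrac{1}{m+1}s^{m+1},
\]
and one checks that $G$ is nondecreasing on $[0,M]$ with $0 \le G(s) \le M^{m+1}$. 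On the spatial side, $\nabla u^m \cdot \nabla\varphi$ yields $-\eta\zeta^2|\nabla u^m|^2$ plus the cross term $2\eta\zeta(M^m - u^m)\,\nabla u^m\cdot \nabla\zeta$, which I would treat with Young's inequality, together with the pointwise bound $(M^m - u^m)^2 \le M^{2m}$.

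Choosing $\eta$ to be the linear ramp of slopes $\pm 1/\tau$ on $[0,\tau]$ and $[T-\tau,T]$ and identically $1$ on $[\tau, T-\tau]$ gives $\int_0^T|\eta'|\,dt = 2$, so the time contribution is controlled by a constant multiple of $M^{m+1}\int_\Omega \zeta^2\,dx$, uniformly in $\tau$. After absorbing the fraction of $\int\eta\zeta^2|\nabla u^m|^2$ produced by Young's inequality into the left-hand side and sending $\tau\to 0$ by monotone convergence, the desired estimate of the stated form follows. The main obstacle is the Steklov-averaging step: because the weak formulation gives no independent control on $\partial_t u$, the manipulation of the $(u^m)_t$ term must be done at the regularized level where $\partial_t [u]_h$ is in $L^2$, and one must verify that each term converges correctly as $h\to 0$. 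Once this is in place, the rest is an arithmetic exercise in Young's inequality.
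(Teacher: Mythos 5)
Your proposal is correct and follows essentially the same route as the paper's, which simply plugs $\varphi=(M^m-u^m)\zeta^2$ into the time-regularized weak-supersolution inequality and refers to \cite[Lemma 2.15]{supersol} for the Steklov-averaging details; your additional time cutoff $\eta$ and the identity $u(u^m)_t=\tfrac{m}{m+1}(u^{m+1})_t$ are exactly the standard devices used there. (As a minor remark, your computation in fact yields the smaller constants $4M^{2m}T$ and $2M^{m+1}$, which of course still gives the stated bound.)
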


\begin{proof}
Formally, we use the test function $\ph=(M^m-u^m)\zeta^2$ in the definition of weak supersolutions. However, since no regularity for $u$ is assumed in the time variable, we need to use a time-regularized inequality to avoid the appearance of the possibly nonexistent quantity $u_t$. The proof is then just a straightforward computation. For the details, we refer to \cite[Lemma 2.15]{supersol}.
\end{proof}

The next step is to show that locally bounded $m$-superporous
functions are weak supersolutions. The idea of the proof is from
\cite{LC}: one approximates a given $m$-superporous function pointwise
by solutions to the obstacle problem. The approximants are weak
supersolutions, so the claim then follows from the Caccioppoli
estimate. For the PME, this argument has been carried out in
\cite{supersol}.

\begin{lemma}\label{$m$-superporous to weak super}
  Let $u$ be a locally bounded $m$-superporous function in $\Omega_T$. Then $u$ is a weak supersolution in $\Omega_T$.  
\end{lemma}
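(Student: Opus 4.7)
The plan is to approximate $u$ from below by solutions of an obstacle problem, which are themselves continuous weak supersolutions, and then pass to the limit using the Caccioppoli estimate of Lemma~\ref{Caccioppoli}. Since being a weak supersolution is a local property, it suffices to work in a space-time cylinder $V = B \times (t_1,t_2) \Subset \Omega_T$ on which $u \le M$, where $B$ is a ball.

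By lower semicontinuity of $u$, I can select smooth functions $\psi_k$ with $\psi_k \nearrow u$ pointwise in $\overline{V}$. For each $k$, let $u_k$ be the solution to the obstacle problem in $V$ with obstacle and parabolic boundary values both equal to $\psi_k$; the existence of such a continuous $u_k$, which is a weak supersolution in $V$, a weak solution on the non-coincidence set $\{u_k > \psi_k\}$, and satisfies $u_k = \psi_k$ on $\bd_p V$, is provided by \cite{obstacle}. By construction $\psi_k \le u_k$ in $V$, and the sequence $\{u_k\}$ is uniformly bounded (by $M+1$, say).

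The crucial step is to use the $m$-superporosity of $u$ to establish $u_k \le u$ in $V$. On $\bd_p V$ we have $u_k = \psi_k \le u$, and on the open set $\{u_k > \psi_k\}$ the function $u_k$ is a continuous weak solution. Exhausting $\{u_k > \psi_k\}$ from within by finite unions of ball-based subcylinders and applying the comparison principle from Definition~\ref{def: $m$-superporous} on each such cylinder, as carried out in detail in \cite{supersol}, yields $u_k \le u$ in $V$. Combined with $\psi_k \le u_k$ and $\psi_k \nearrow u$, this forces $u_k \to u$ pointwise in $V$.

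With the uniform bound $u_k \le M+1$, the Caccioppoli estimate applied to each $u_k$ gives a uniform bound on $\grad u_k^m$ in $L^2_{\loc}(V)$. Extracting a weakly convergent subsequence and using dominated convergence (so that $u_k^m \to u^m$ in $L^2_{\loc}(V)$) to identify the weak limit as $\grad u^m$, I pass to the limit $k \to \infty$ in the weak supersolution inequality
\[
\int_V \big(-u_k \ph_t + \grad u_k^m \cdot \grad \ph\big) \, dx \, dt \ge 0
\]
for nonnegative $\ph \in C_0^\infty(V)$, obtaining the corresponding inequality for $u$. The main obstacle is the comparison step $u_k \le u$: the $m$-superporous comparison principle only allows testing against continuous weak \emph{solutions}, whereas $u_k$ is globally only a supersolution, so care is needed in localizing to the non-coincidence set and in handling the lower semicontinuity of $u$ on the boundary of this set.
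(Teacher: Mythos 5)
Your proposal follows essentially the same route as the paper's proof: approximation from below by solutions of obstacle problems with smooth obstacles $\psi_k\nearrow u$, the comparison property of $u$ (applied on the non-coincidence set, where $u_k$ is a genuine weak solution, via finite unions of cylinders) to get $\psi_k\le u_k\le u$ and hence $u_k\to u$, and finally the Caccioppoli estimate plus weak compactness to pass to the limit in the weak formulation. The paper likewise defers the delicate details of the comparison step to \cite{supersol}, so your argument is correct and matches the intended proof.
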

\begin{proof}
  We give the main points of the argument, referring to the proof of
  Theorem 3.2 in \cite{supersol} for the full details.  Since $u$ is
  lower semicontinuous, there exists a sequence of functions
  $\psi_k\in C^\infty(\Omega_T)$, such that
  \[
  \psi_i < \psi_{i+1} \quad \text{for every $i$,}
  \]
  and $\lim_{k\rightarrow \infty} \psi_k(x,t) = u(x,t)$ for every
  $(x,t)\in \Omega_T$. Without loss of generality, we may consider a
  set $Q_{t_1,t_2}\Subset \Omega_T$. For each $k$, let $u_k$ be the
  solution to the obstacle problem with obstacle function $\psi_k$. By
  Theorem 2.6 in \cite{obstacle}, a solution $u_k$ exist, such that
  \begin{align*}
    &u_k=\psi_k \text{ on } \bd_p Q_{t_1,t_2},\\
    &u_k\ge \psi_k \text{ in } Q_{t_1,t_2} \text{ and}\\
    &u_k^m \in L^2(t_1,t_2; H^1(Q)).
  \end{align*}
  Moreover, $u_k$ is a continuous weak supersolution in $Q_{t_1,t_2}$,
  and a weak solution in the open set $\{u_k>\psi_k\}$.
  The latter fact and the comparison
  principle of Remark \ref{finite union of cylinders} imply that 
    \begin{align*}
    u_1\le u_2\le \ldots \quad \text{and}\\
    u_k\le u \quad \text{for every $k$}.\\
  \end{align*}
  Now we have $u_k\rightarrow u$ due to the 
  inequalities
  \begin{equation*}
    \psi_k\leq u_k\leq u;
  \end{equation*}
  recall that $\lim_{k\rightarrow \infty} \psi_k(x,t) = u(x,t)$.



  Finally, the fact that $u$ is indeed a weak supersolution to the
  porous medium equation follows from the Caccioppoli estimate (Lemma
  \ref{Caccioppoli}) and weak compactness. 
\end{proof}

We now have everything we need to prove the second main result.

\begin{proof}[Proof of Theorem \ref{thm: super}]
  Let $u$ be a continuous weak supersolution in $\Omega_T$. By the
  definition of weak derivatives, it is clear that $u$ is also a very
  weak supersolution. 
  Let then $u$ be a continuous very weak supersolution. The comparison
  property with respect to continuous weak solutions for any
  space-time cylinder $U_{t_1,t_2} \Subset \Omega_T$ is the content of
  Lemma \ref{very weak super to $m$-superporous}. Thus continuous very
  weak supersolutions are $m$-superporous.
  Finally, a continuous $m$-superporous function $u$ is locally
  bounded, and hence a weak supersolution by Lemma
  \ref{$m$-superporous to weak super}.
\end{proof}

\vspace{0.5cm}
\noindent
\small{\textsc{Pekka Lehtel\"a},}
\small{\textsc{Department of Mathematics},}
\small{\textsc{P.O. Box 11100},}
\small{\textsc{FI-00076 Aalto University},}
\small{\textsc{Finland}}\\
\footnotesize{\texttt{pekka.lehtela@aalto.fi}}

\vspace{0.3cm}
\noindent
\small{\textsc{Teemu Lukkari},}
\small{\textsc{Department of Mathematics},}
\small{\textsc{P.O. Box 11100},}
\small{\textsc{FI-00076 Aalto University},}
\small{\textsc{Finland}}\\
\footnotesize{\texttt{teemu.lukkari@aalto.fi}}

\end{document}